\newtheorem{theorem}{Theorem}[section]
\newtheorem{lemma}[theorem]{Lemma}
\newtheorem{proposition}[theorem]{Proposition}
\newtheorem{corollary}[theorem]{Corollary}
\newtheorem{definition}[theorem]{Definition}
\newtheorem{note}[theorem]{Note}
\theoremstyle{definition}
\newtheorem{example}[theorem]{Example}
\newtheorem{remark}[theorem]{Remark}
\begin{document}
	\title[A study on the Dual spaces of $(C(X), \tau_{\mathcal{B}})$ and $(C(X), \tau_{\mathcal{B}}^s)$]{A study on the dual of $C(X)$ with the topology of (strong) uniform convergence on a bornology}	
	
	\author{Akshay Kumar}
	
	\newcommand{\acr}{\newline\indent}
	
	\address{Akshay Kumar: Department of Mathematics, Indian Institute of Technology Madras, Chennai-600036, India}
	\email{akshayjkm01@gmail.com}
	
	
	\thanks{This work is supported by IIT Madras (Project No. SB22231267MAETWO008573).}	
	
	\subjclass[2010]{ Primary: 54C35, 54C40, 46A16; Secondary: 28A33, 54D70}	
	\keywords{Bornologies, topology of (strong) uniform convergence on a bornology, compact-open topology, extended locally convex spaces, topology of uniform convergence on bounded sets, Borel measures supported on bornology.}
	
	\begin{abstract} This article begins by deriving a measure-theoretic decomposition of continuous linear functionals on $C(X)$, the space of all real-valued continuous functions on a metric space $(X, d)$, equipped with the topology $\tau_\mathcal{B}$ of uniform convergence on a bornology $\mathcal{B}$. We characterize the bornologies for which $(C(X), \tau_{\mathcal{B}})^*=(C(X), \tau_{\mathcal{B}}^s)^*$, where $\tau_{\mathcal{B}}^s$ represents the topology of strong uniform convergence on $\mathcal{B}$. Furthermore, we examine the normability of $\tau_{ucb}$, the topology of uniform convergence on bounded subsets, on $(C(X), \tau_{\mathcal{B}})^*$, and explore its relationship with the operator norm topology. Finally, we derive a topology on measures that shares a connection with $(C(X), \tau_{\mathcal{B}})^*$ when endowed with $\tau_{ucb}$. \end{abstract}

	\maketitle
	
	\section{Introduction and Preliminaries}
	\subsection{Introduction}
	Let $C(X)$ be the space of real-valued continuous functions on a metric space $(X,d)$. On $C(X)$, the classical topology of uniform convergence on a bornology $\mathcal{B}$, usually denoted by $\tau_{\mathcal{B}}$, has garnered significant attention from many mathematicians over the past several decades (see, \cite{Ucanbfms, Suc, SWasucob, MN}).

	It is known that if $\mathcal{B}=\mathcal{K}$, the collection of all non-empty relatively compact subsets of $X$, then $(C(X), \tau_k)$ forms a locally convex space. Moreover, the dual $(C(X), \tau_k)^*$ is isometrically isomorphic to the space of all closed regular finite Borel measures on $X$ with compact supports (see, Theorem 2.4.1, p. 88  in \cite{bns}). For a general bornology $\mathcal{B}$, $(C(X), \tau_{\mathcal{B}})$ may not necessarily form a topological vector space, as the scalar multiplication may fail to be jointly continuous. However, as shown in \cite{esaetvs}, $(C(X), \tau_{\mathcal{B}})$ is always an extended locally convex space (elcs), which allows for the study of its dual space.

	This paper is organized into four sections. Following a necessary preliminary details, the second section develops a decomposition of continuous linear functionals on $(C(X), \tau_{\mathcal{B}})$ in terms of closed, regular, finite Borel measures supported on $\mathcal{B}$.
	
	In the third section, we compare the dual spaces of $(C(X), \tau_{\mathcal{B}})$ and $(C(X), \tau_{\mathcal{B}}^s)$, where $\tau_{\mathcal{B}}^s$ denotes the topology of strong uniform convergence on the bornology $\mathcal{B}$. This topology was defined by Beer and Levi in \cite{Suc} and has been further explored by many people (see, \cite{ATasucob, Bspafs, SWasucob}). We provide an example demonstrating that the duals of these spaces may differ (see, Example \ref{dual of both spaces may not coincide}). Additionally, in Theorem \ref{condition under which both the spaces have same dual}, we find conditions under which both the spaces share the same dual.
	
	Since $(C(X), \tau_\mathcal{B})$ forms a locally convex space for $\mathcal{B}\subseteq\mathcal{K}$, the natural topology on $(C(X), \tau_{\mathcal{B}})^*$ is the topology $\tau_{ucb}$ of uniform convergence on bounded subsets of $(C(X), \tau_{\mathcal{B}})$ (see, Definition \ref{def of uniform convergence on bounded sets}). However, many people have considered the operator norm topology when studying $(C(X), \tau_{k})^*$ (see, \cite{bns, kundu1989spaces, wheeler1976mackey}). A natural question arises: do $\tau_{ucb}$ and $\|\cdot\|_{op}-$topology coincide? In the fourth section, we present Example \ref{example for operator and strong topology not coincide} to illustrate that this is not always the case. We further examine the normability of $\tau_{ucb}$ on $(C(X), \tau_{\mathcal{B}})^*$ and $(C(X), \tau_{\mathcal{B}}^s)^*$. We show that $\|\cdot\|_{op}$ may not define a norm on $(C(X), \tau_{\mathcal{B}})^*$. To address this, we introduce a topology $\sigma$ on measures, which shares a connection between $((C(X), \tau_{\mathcal{B}})^*, \tau_{ucb})$ and measures supported on $\mathcal{B}$.


	%
	%
	%
	%
	
	\subsection{Preliminaries}
	
	We denote a metric space by $(X, d)$ (or $X$). A collection $\mathcal{B}$ of non-empty subsets of $X$ is said to form a \textit{bornology} on $X$ if it covers $X$ and is closed under finite union and taking subsets of its members. A subfamily $\mathcal{B}_0$ of $\mathcal{B}$ is said to be a \textit{base} for $\mathcal{B}$ if it is cofinal in $\mathcal{B}$ under the set inclusion. If all the members of $\mathcal{B}_0$ are closed in $(X,d)$, then we say $\mathcal{B}$ has a \textit{closed base}. For more details on bornologies, we refer to \cite{bala, Ballf, hogbe,  Bc}.

	\begin{definition}\label{definition of tauB}{\normalfont (\cite{Suc})} \normalfont The topology $\tau_{\mathcal{B}}$ of \textit{uniform convergence on} $\mathcal{B}$ is determined by a uniformity on $C(X)$ having base consisting of sets of the form  $$[B, \epsilon]=\left\lbrace (f, g) : \forall x\in B,~ |f(x)-g(x)|<\epsilon \right\rbrace~ (B\in\mathcal{B},~ \epsilon>0).$$\end{definition}
	
	\noindent Note here that 
	\begin{enumerate}[(1)]
		\item if $\mathcal{B}=\mathcal{F}$, the bornology of all non-empty finite subsets of $X$, then $\tau_\mathcal{B}$ will give the \textit{topology of pointwise convergence}, which is usually denoted by $\tau_p$;
		
		\item if $\mathcal{B}=\mathcal{K}$, the boronology of all non-empty relatively compact subsets of $X$, then $\tau_\mathcal{B}$ will give the \textit{topology of uniform convergence on compact sets} or \textit{compact open topology}, which is usually denoted by $\tau_{k}$;
		
		\item if $\mathcal{B}=\mathcal{P}_0(X)$, the boronology of all non-empty subsets of $X$, then  $\tau_\mathcal{B}$ will give the \textit{topology of uniform convergence}.  We usually denote this topology by $\tau_u$.
	\end{enumerate}
	
	\noindent These particular cases have been widely explored in the literature (see, \cite{kelley, Fswufagt, willard}).
	
	For a bornology $\mathcal{B}$ on $X$, in \cite{Suc}, Beer and Levi present a variational form of $\tau_{\mathcal{B}}$ known as the topology of strong uniform convergence on $\mathcal{B}$. 
	
	\begin{definition}{\normalfont (\cite{Suc})} \normalfont The topology  $\tau^{s}_{\mathcal{B}}$ of \textit{strong uniform convergence on $\mathcal{B}$} is determined by a uniformity on $C(X)$ having base consisting of sets of the form  $$[B, \epsilon]^s=\left\lbrace (f, g) : \exists~ \delta>0 ~\forall x\in B^\delta,~ |f(x)-g(x)|<\epsilon \right\rbrace~ (B\in\mathcal{B},~ \epsilon>0), $$ where for $B\subseteq X$, $B^\delta=\displaystyle{\bigcup_{y\in B}}\{x\in X: d(x, y)<\delta\}$.  	 
	\end{definition}
	
	\begin{remark}\normalfont 
		\begin{enumerate}[(1)]
			\item For every bornology $\mathcal{B}$ on $X$, $\overline{\mathcal{B}}= \{A\subseteq \overline{B}: B\in \mathcal{B}\}$ forms a bornology on $X$. It is easy to prove that $\tau_{\mathcal{B}}=\tau_{\overline{\mathcal{B}}}$ and $\tau_{\mathcal{B}}^s=\tau_{\overline{\mathcal{B}}}^s$ on $C(X)$. Hence, from this point of view, we assume that every bornology $\mathcal{B}$ has a closed base $\mathcal{B}_0$.  	
			
			%
			\item  Note here that if $\mathcal{B}=\mathcal{F}$ and we extend our function space to $\mathbb{R}^X$, the set of all real-valued functions on $(X,d)$, then $\tau_{\mathcal{B}}^s$ will give the weakest topology on $\mathbb{R}^X$ for which $C(X)$ is closed (see, Corollary 6.8 in \cite{Suc}).
	\end{enumerate}	\end{remark}

	Recall from \cite{flctopology, esaetvs} that a vector space $Z$ equipped with a topology $\tau$ is said to be an \textit{extended locally convex space} (elcs) if $\tau$ is induced by a collection of extended seminorms (functions which satisfy all the properties of a seminorm and may attain the infinity value). Note that the topologies $\tau_{\mathcal{B}}$ and $\tau_{\mathcal{B}}^s$ on $C(X)$ are induced by the collections $\mathscr{P}=\{\rho_B: B\in \mathcal{B}_0\}$ and $\mathscr{P}^s=\{\rho_B^s: B\in \mathcal{B}_0\}$ of extended seminorms, respectively, where $\rho_B(f) =\sup_{x\in B}|f(x)|$ and $\displaystyle{\rho_B^s(f) =\inf_{\delta>0}\left\lbrace \sup_{x\in B^\delta}|f(x)|\right\rbrace}$ for all $f\in C(X)$ (see, \cite{flctopology}). Therefore, both the function spaces $(C(X), \tau_{\mathcal{B}})$ and $(C(X), \tau_{\mathcal{B}}^s)$ form extended locally convex spaces. One can note that both $\mathscr{P}$ and $\mathscr{P}^s$ are directed families as for every $B_1, B_2\in\mathcal{B}$, we have $B_1\cup B_2\in\mathcal{B}$. Consequently, $\max \{\rho_{B_1}, \rho_{B_2}\}\leq \rho_{B_1\cup B_2}$ and $\max \{\rho^s_{B_1}, \rho^s_{B_2}\}\leq \rho^s_{B_1\cup B_2}$. Therefore, $$\mathcal{N}=\{\rho_B^{-1}([0, r)): r>0 \text{ and } B\in\mathcal{B}\}\text{ and }$$ $$\mathcal{N}^s=\{(\rho_B^s)^{-1}([0, r)): r>0 \text{ and } B\in\mathcal{B}\}$$ give a neighborhood base at $0$ in $(C(X), \tau_{\mathcal{B}})$ and $(C(X), \tau_{\mathcal{B}}^s)$, respectively. For more details on extended locally convex spaces, we refer to \cite{doelcs, flctopology,  belcsaubp, Relcs, esaetvs}.
	
	\section{Measure theoretical Decomposition}
	
	Recall that $(C(X), \tau_{k})^*$ is isometrically isomorphic to the space of all closed regular Borel measures with compact support (see, Theorem 2.4.1, p. 88  in \cite{bns}). Therefore, a natural candidate to describe the dual of $(C(X), \tau_{\mathcal{B}})$ is the collection of all closed regular finite Borel measures with support in $\mathcal{B}$. Let
	\begin{enumerate}
		\item 	$\mathscr{B}(X)$ denotes the Borel sigma algebra induced by $X$;
		\item  $\mathscr{M}(X)$ denotes the collection of all finite Borel measures on $\mathscr{B}(X)$;
		\item $\mathscr{M}_{\mathcal{B}}(X)$ denotes the collection of all closed regular, finite, Borel measures on $\mathscr{B}(X)$ supported on $\mathcal{B}$, that is, for every $\mu\in\mathscr{M}_{\mathcal{B}}$, there exists a closed $B\in\mathcal{B}$ such that $|\mu|\left(X\setminus B\right)=0$.
	\end{enumerate}
	
	We use the next proposition to demonstrate that corresponding to every $\mu\in\mathscr{M}_{\mathcal{B}}(X)$, there exists a continuous linear functional on $(C(X), \tau_{\mathcal{B}})$. 
	
	For $B\in\mathcal{B}$, we define 
	\begin{eqnarray*}
		C(X)_{fin}^{B}&=& \{f\in C(X): \rho_B(f)<\infty\}\\
		&=&\{f\in C(X): \sup_{x\in B}|f(x)|<\infty\}, \text{ and }\\
		C(X)_{fin}^{B_s}&=& \{f\in C(X): \rho_B^s(f)<\infty\}\\
		&=&\left\lbrace f\in C(X): \inf_{\delta>0}\left(\sup_{x\in B^\delta}|f(x)|\right)<\infty\right\rbrace.  
	\end{eqnarray*}
	
	\begin{proposition}\label{every measure gives a continuous map} Suppose $\mu\in \mathscr{M}_{\mathcal{B}}(X)$ is supported on closed $B\in\mathcal{B}$. Then the linear functional $H_\mu(f)=\int_X fd\mu$ for all $f\in C(X)_{fin}^B$ is continuous on $\left(C(X)_{fin}^B, \tau_\mathcal{B}|_{C(X)_{fin}^B}\right)$.  
	\end{proposition}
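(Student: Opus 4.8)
The plan is to reduce everything to the single estimate $|H_\mu(f)| \le |\mu|(X)\,\rho_B(f)$, valid for every $f \in C(X)_{fin}^B$, and then to read the continuity off from it. To begin, I would check that $H_\mu$ is well defined, i.e.\ that each $f \in C(X)_{fin}^B$ is $\mu$-integrable: $f$ is continuous hence Borel measurable, $B$ is closed hence a member of $\mathscr{B}(X)$, and $|\mu|(X\setminus B)=0$, so
\[
\int_X |f|\,d|\mu| \;=\; \int_B |f|\,d|\mu| \;\le\; \Bigl(\sup_{x\in B}|f(x)|\Bigr)\,|\mu|(B) \;=\; \rho_B(f)\,|\mu|(X) \;<\;\infty,
\]
the finiteness being precisely the defining property of $C(X)_{fin}^B$. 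Hence $H_\mu(f)=\int_X f\,d\mu=\int_B f\,d\mu$ is a well-defined real number, $H_\mu$ is plainly linear, and the same chain applied to $\bigl|\int_B f\,d\mu\bigr| \le \int_B |f|\,d|\mu|$ yields the asserted estimate $|H_\mu(f)|\le |\mu|(X)\,\rho_B(f)$.

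It remains to convert this bound into topological continuity. By linearity it suffices to check continuity at $0$. Given $\epsilon>0$, observe that on the subspace $C(X)_{fin}^B$ the extended seminorm $\rho_B$ is finite-valued, so
\[
U \;=\; \rho_B^{-1}\Bigl(\bigl[0,\tfrac{\epsilon}{|\mu|(X)+1}\bigr)\Bigr)\cap C(X)_{fin}^B
\]
is a basic neighborhood of $0$ for $\tau_{\mathcal{B}}|_{C(X)_{fin}^B}$ --- it is the trace on $C(X)_{fin}^B$ of the member of $\mathcal{N}$ corresponding to the choice $B' = B \in \mathcal{B}$. For $f\in U$ the estimate gives $|H_\mu(f)|\le |\mu|(X)\,\rho_B(f)<\epsilon$, so $H_\mu$ is continuous at $0$, hence everywhere on $\left(C(X)_{fin}^B, \tau_{\mathcal{B}}|_{C(X)_{fin}^B}\right)$.

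I do not expect a genuine obstacle here: the statement is essentially the total-variation bound for integration against a finite measure. The only point meriting care is that $\rho_B$ takes the value $+\infty$ somewhere on $C(X)$, which is exactly why one passes to the subspace $C(X)_{fin}^B$ --- where $\rho_B$ becomes an ordinary seminorm --- before speaking of continuity. Note also that the closed regularity built into the definition of $\mathscr{M}_{\mathcal{B}}(X)$ is not needed for this direction; it will instead be used in the converse representation of functionals by measures and in establishing uniqueness of the representing measure.
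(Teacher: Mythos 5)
Your proposal is correct and follows essentially the same route as the paper: both derive the bound $|H_\mu(f)|\le \rho_B(f)\,|\mu|(B)$ by restricting the integral to the support $B$ and then conclude continuity from the fact that $\rho_B$ restricts to a genuine (finite-valued) continuous seminorm on $C(X)_{fin}^B$. Your additional remarks on well-definedness of the integral and the explicit neighborhood argument simply make explicit what the paper leaves implicit.
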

	
	\begin{proof} For every $f\in C(X)_{fin}^B$, we have
		\begin{eqnarray*}
			|H_\mu(f)|&=& \left|\int_X fd\mu\right|\leq \int_X|f|d\mu\\
			& =&\int_B|f|d\mu\leq \sup_{x\in B}|f(x)| \int_{B}1 d\mu\\
			& \leq& \rho_B(f)|\mu|(B).
		\end{eqnarray*}  Hence, $H_\mu$ is continuous on $C(X)_{fin}^B.$
	\end{proof}
	
	\begin{lemma}{\normalfont(\cite{flctopology, esaetvs})} For every continuous extended seminorm $\rho$ on an elcs $(Z, \tau)$, the following statements hold.
		\begin{enumerate}[(1)]
			\item $Z_{fin}^\rho:=\{x\in Z: \rho(x)<\infty\}$ is a clopen subspace of $(Z, \tau)$.
			\item If $Z=Z_{fin}^\rho\oplus M$ (algebraic direct sum) for some subspace $M$ of $Z$, then $M$ is a discrete subspace of $(Z, \tau)$.  
		\end{enumerate}
	\end{lemma}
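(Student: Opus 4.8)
The plan rests on two standard features of an extended locally convex space $(Z,\tau)$ recalled in the Preliminaries: the additive translations $x\mapsto x+z$ are homeomorphisms of $Z$, and a finite intersection of sets of the form $(\rho')^{-1}([0,r))$ (with $\rho'$ in the generating family of extended seminorms and $r>0$) is a neighbourhood of $0$; in particular, since $\rho$ is continuous, $U:=\rho^{-1}([0,1))$ is an open neighbourhood of $0$, as $[0,1)$ is open in the extended half-line $[0,\infty]$ and $\rho(0)=0$.

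For (1), I would first check that $Z_{fin}^\rho$ is a linear subspace, which is immediate from the subadditivity $\rho(x+y)\le\rho(x)+\rho(y)$ and the absolute homogeneity $\rho(\lambda x)=|\lambda|\rho(x)$ (using the convention $0\cdot\infty=0$). For openness, fix $x\in Z_{fin}^\rho$; then for every $z\in U$ we get $\rho(x+z)\le\rho(x)+\rho(z)<\infty$, so the neighbourhood $x+U$ of $x$ is contained in $Z_{fin}^\rho$. Thus $Z_{fin}^\rho$ is an open subgroup of the topological group $(Z,+)$, and an open subgroup is automatically closed because its complement is the union of its cosets, each of which is open. Hence $Z_{fin}^\rho$ is clopen.

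For (2), assume $Z=Z_{fin}^\rho\oplus M$ algebraically. Since translations within $M$ are homeomorphisms, it is enough to show that $\{0\}$ is open in the subspace topology on $M$. I claim $M\cap U=\{0\}$: if $m\in M$ with $\rho(m)<1$, then in particular $\rho(m)<\infty$, so $m\in Z_{fin}^\rho$, and the directness of the sum forces $m\in Z_{fin}^\rho\cap M=\{0\}$. Therefore $U\cap M=\{0\}$ is relatively open in $M$, so $M$ is discrete.

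I do not expect a genuine obstacle; the only delicate point is bookkeeping about the codomain topology of $\rho$ — namely that ``continuous extended seminorm'' is exactly what makes $\rho^{-1}([0,1))$ open — together with the convention $0\cdot\infty=0$ needed to handle scalar multiples. Beyond that, the argument is a direct application of the extended triangle inequality and the group structure of $(Z,+)$, so I would present it essentially as above without further elaboration.
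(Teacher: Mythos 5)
Your argument is correct: the paper itself states this lemma without proof (citing \cite{flctopology, esaetvs}), and your reasoning --- that $U=\rho^{-1}([0,1))$ is an open neighbourhood of $0$ by continuity of $\rho$, that $x+U\subseteq Z_{fin}^{\rho}$ by the extended triangle inequality so $Z_{fin}^{\rho}$ is an open (hence closed) subgroup, and that $U\cap M=\{0\}$ forces $M$ to be discrete --- is exactly the standard argument used in those references. No gaps; the one point worth keeping explicit is the one you already flagged, namely that continuity of an extended seminorm is precisely the statement that $\rho^{-1}([0,r))$ is a neighbourhood of $0$ for each $r>0$.
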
	
	
	Observe here that for every $B\in \mathcal{B}$, $\rho_B$ gives a continuous extended seminorm on $(C(X), \tau_{\mathcal{B}})$. Therefore, $C(X)_{fin}^{B}$ is a clopen subspace of $(C(X), \tau_{\mathcal{B}})$ and $M_B$ is a discrete subspace of $(C(X), \tau_{\mathcal{B}})$, where $C(X)=C(X)_{fin}^B\oplus M_B$ (it is an algebraic direct sum. However, it will turn out to be a topological direct sum). Consequently, the linear map $H_\mu\circ p_B$ is continuous on $(C(X), \tau_{\mathcal{B}})$, where $p_B$ is the projection of $C(X)$ onto $C(X)_{fin}^B$ and $H_\mu$ is the linear map as defined in Proposition \ref{every measure gives a continuous map}. Hence, for every closed regular, finite Borel measure $\mu$ with a support in $\mathcal{B}$, we have a continuous linear functional $H_\mu\circ p_B$ on $(C(X), \tau_{\mathcal{B}})$. Since $M_B$ is  discrete subspace of $(C(X), \tau_{\mathcal{B}})$, every linear functional on $M_B$ is continuous. Thus, for every $\mu\in\mathscr{M}_{\mathcal{B}}(X)$ supported on $B$ and linear functional $H_2$ on $M_B$, the linear functional $F:= H_\mu\circ p_B+H_2\circ q_B$ is continuous on $(C(X), \tau_{\mathcal{B}})$, where $q_B$ is the projection of $C(X)$ onto $M_B$. We next show that every continuous linear functional on $(C(X), \tau_{\mathcal{B}})$ has this form. \\
	
	\noindent We now define support of a continuous linear functional on $(C(X), \tau_{\mathcal{B}})$.
	\begin{definition}\normalfont  We say a linear functional $F$ on $C(X)$ is \textit{supported  on} $B\in\mathcal{B}$ if $F(f)=0$ whenever $f|_B=0$.\end{definition}   
	
	\begin{lemma}\label{support of a continuous linear functional} Suppose $F$ is a continuous linear functional on $(C(X), \tau_{\mathcal{B}})$. Then there exists a closed $B\in \mathcal{B}$ such that $F$ is supported on $B$.\end{lemma}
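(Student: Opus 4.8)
The plan is to extract from the continuity of $F$ a bound by a single extended seminorm $\rho_B$ with $B$ closed, and then to observe that any $f$ with $f|_B = 0$ lies in $C(X)_{fin}^B$ and has $\rho_B(f) = 0$, so that the bound forces $F(f) = 0$.

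First I would use continuity of $F$ at $0$: since $\mathcal{N}$ is a neighborhood base at $0$ in $(C(X), \tau_{\mathcal{B}})$, there exist $B_1 \in \mathcal{B}$ and $r > 0$ such that $|F(f)| \le 1$ for every $f \in C(X)$ with $\rho_{B_1}(f) < r$. Because $\mathcal{B}$ is assumed to have a closed base $\mathcal{B}_0$, I can choose a closed $B \in \mathcal{B}_0$ with $B_1 \subseteq B$; then $\rho_{B_1} \le \rho_B$, so $\rho_B^{-1}([0,r)) \subseteq \rho_{B_1}^{-1}([0,r))$, and hence $|F(f)| \le 1$ whenever $\rho_B(f) < r$.

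Next I would upgrade this to a homogeneous estimate valid on $C(X)_{fin}^B$. Fix $f \in C(X)_{fin}^B$, i.e.\ $\rho_B(f) < \infty$. For each $\epsilon > 0$, the function $g = \frac{r}{\rho_B(f) + \epsilon} f$ satisfies $\rho_B(g) = \frac{r\,\rho_B(f)}{\rho_B(f) + \epsilon} < r$, so $|F(g)| \le 1$; by linearity $|F(f)| \le \frac{\rho_B(f) + \epsilon}{r}$, and letting $\epsilon \to 0^+$ gives $|F(f)| \le \frac{1}{r}\,\rho_B(f)$ for all $f \in C(X)_{fin}^B$. Finally, if $f \in C(X)$ satisfies $f|_B = 0$, then $\rho_B(f) = \sup_{x \in B}|f(x)| = 0 < \infty$, so $f \in C(X)_{fin}^B$ and the estimate yields $|F(f)| \le 0$, i.e.\ $F(f) = 0$. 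Thus $F$ is supported on the closed set $B \in \mathcal{B}$.

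There is no genuinely hard step here; the only points needing a little care are the passage from a basic-neighborhood bound to the homogeneous bound via the scaling trick above (the standard device in the extended-seminorm setting, where $\rho_B(f)$ may be infinite for some $f$), and the replacement of $B_1$ by a closed member of $\mathcal{B}$, which is legitimate by the standing assumption that $\mathcal{B}$ has a closed base.
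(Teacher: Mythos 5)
Your argument is correct and is essentially the paper's proof with the details filled in: the paper simply asserts that continuity yields a closed $B\in\mathcal{B}_0$ with $|F|\leq\rho_B$ and that the support claim is then immediate, which is exactly what your scaling argument and the observation $\rho_B(f)=0$ for $f|_B=0$ establish. Your extra care in restricting the homogeneous bound to $C(X)_{fin}^B$ is a harmless (indeed slightly more scrupulous) variant, since the bound holds vacuously when $\rho_B(f)=\infty$.
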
 
	
	\begin{proof} Since $F$ is continuous on $(C(X), \tau_{\mathcal{B}})$, there exists a $B\in \mathcal{B}_0$ such that $|F(f)|\leq \rho_B(f)$ for all $f\in C(X)$. It is easy to see that $F$ is supported on $B$.
	\end{proof}
	
	The next example shows that a continuous linear functional on $(C(X), \tau_{\mathcal{B}})$ can be supported on two distinct sets.
	\begin{example} Let $\mathcal{B}=\{A\subseteq [a, \infty): a\in \mathbb{R}\}$. Consider the linear functional $F$ on $(C(X), \tau_{\mathcal{B}})$ by $$F(f)=\int_{0}^{1}f(x)dx.$$ Then $F\in (C(X), \tau_{\mathcal{B}})^*$. It is easy to see that $F$ is supported on both $[0, 1]$ and $[0, 4]$. 
	\end{example}
	
	\begin{note}\normalfont One can note here that if $F$ is supported on $A$, then $F$ is supported on $D$ for every $A\subseteq D.$\end{note}  
	
	Recall that one of the important steps in examining barreledness and many other functional properties of $(C(X), \tau_{k})$ is to show that every continuous linear functional on $(C(X), \tau_{k})$ has minimal support in $\mathcal{K}$ (see, \cite{bns}). The next example shows that this may not be true in the case of an arbitrary bornology. However, for a non-zero $F\in (C(X), \tau_{\mathcal{B}})$, the collection $\mathscr{S}_F:=\{B\in\mathcal{B}: F \text{ is supported on } B \}$ always satisfies the finite intersection property.
	
	\begin{proposition}\label{F is supported on A intersection B} Let $0\neq F\in \left(C(X), \tau_{\mathcal{B}}\right)^*$ supported on $A, B\in\mathcal{B}$. Then $F$ is supported on the non-empty set $A\cap B$. \end{proposition}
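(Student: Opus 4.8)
The plan is to exploit the metric on $X$ to split an arbitrary test function into a piece killed by ``$F$ supported on $A$'' and a piece killed by ``$F$ supported on $B$.'' Since a continuous function vanishing on $A$ also vanishes on $\overline{A}$ (and trivially conversely), being supported on $A$ is equivalent to being supported on $\overline{A}$, so we may and do assume $A$ and $B$ are closed. Put $d_A(x)=d(x,A)$ and $d_B(x)=d(x,B)$; both lie in $C(X)$, and $d_A(x)+d_B(x)=0$ holds precisely when $x\in A\cap B$.

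The core step is a decomposition lemma: given $f\in C(X)$ with $f|_{A\cap B}=0$, one sets
\[
f_1=f\cdot\frac{d_A}{d_A+d_B},\qquad f_2=f\cdot\frac{d_B}{d_A+d_B}\quad\text{on }X\setminus(A\cap B),
\]
and $f_1=f_2=0$ on $A\cap B$ (a consistent prescription, since $f$ vanishes there). Then $f_1+f_2=f$ on all of $X$, while $f_1\equiv 0$ on $A$ (where $d_A\equiv0$) and $f_2\equiv0$ on $B$ (where $d_B\equiv0$). On $X\setminus(A\cap B)$ the denominator is positive, so $f_1,f_2$ are continuous there; the one point requiring care — and the step I expect to be the main obstacle — is continuity at a point $x_0\in A\cap B$. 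Here I would use that the multipliers $d_A/(d_A+d_B)$ and $d_B/(d_A+d_B)$ are bounded in absolute value by $1$, so that $|f_1(x)|,|f_2(x)|\le|f(x)|\to|f(x_0)|=0$ as $x\to x_0$, which gives $f_1,f_2\in C(X)$.

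With the decomposition in hand, everything follows quickly. For any $f$ with $f|_{A\cap B}=0$ we get $F(f)=F(f_1)+F(f_2)=0$, since $F$ is supported on $A$ (so $F(f_1)=0$) and on $B$ (so $F(f_2)=0$); hence $F$ is supported on $A\cap B$. Finally, non-emptiness of $A\cap B$ is a byproduct: if $A\cap B=\varnothing$, then the hypothesis ``$f|_{A\cap B}=0$'' is vacuous for every $f\in C(X)$, so the argument just given forces $F\equiv0$, contradicting $F\neq0$. Therefore $A\cap B\neq\varnothing$ and $F$ is supported on it.
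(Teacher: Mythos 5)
Your argument is correct for closed $A$ and $B$, and it takes a genuinely different route from the paper's. The paper does not decompose $f$: given $f$ with $f|_{A\cap B}=0$, it pastes $f|_A$ with the zero function on $B$ into a continuous $g_0$ on $A\cup B$, extends to $g\in C(X)$ by the Tietze theorem, and notes that $g|_B=0$ and $(f-g)|_A=0$ force $F(f)=F(g)=0$; non-emptiness of $A\cap B$ is handled first by the same paste-and-extend device. Your partition-of-unity splitting $f=f\cdot\frac{d_A}{d_A+d_B}+f\cdot\frac{d_B}{d_A+d_B}$ replaces the appeal to Tietze by an explicit formula available in any metric space, and your continuity check at points of $A\cap B$ (domination by $|f|$, which tends to $0$ there) addresses exactly the right point. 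Both arguments need $A$ and $B$ closed: the paper's because pasting and Tietze require it, yours because positivity of $d_A+d_B$ off $A\cap B$ requires $\overline{A}\cap\overline{B}=A\cap B$.

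That said, your opening reduction ``we may and do assume $A$ and $B$ are closed'' is not a harmless WLOG for the statement as written. Replacing $A,B$ by $\overline{A},\overline{B}$ changes the conclusion from ``$F$ is supported on the non-empty set $A\cap B$'' to the strictly weaker ``$F$ is supported on the non-empty set $\overline{A}\cap\overline{B}$'': being supported on a smaller set is the stronger property, and $\overline{A}\cap\overline{B}\neq\emptyset$ does not yield $A\cap B\neq\emptyset$. Indeed the proposition fails without closedness: on $X=\mathbb{R}$ with $\mathcal{B}=\mathcal{K}$, take $F(f)=f(0)$, $A=(0,1)$, $B=(-1,0)$; by continuity $F$ is supported on both $A$ and $B$, yet $A\cap B=\emptyset$. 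So closedness is a genuine hypothesis rather than something to be assumed for free. Since the paper's own proof silently needs it as well, and the paper's standing convention is that $\mathcal{B}$ has a closed base, this is a shared imprecision in the statement rather than a defect peculiar to your argument --- but you should say you are proving the result for closed $A,B$ rather than claiming a reduction to that case.
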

	
	\begin{proof} We first show that $A\cap B\neq \emptyset$. Let $A\cap B=\emptyset$. By Pasting lemma and Tietze extension theorem, for each $f\in C(X)$, we have an $h\in C(X)$ such that $h=f$ on $A$ and $h=0$ on $B$. Since $F$ is supported on both $A$ and $B$, $F(f)=F(h)=0$. Thus, $F(f)=0$ for all $f\in C(X)$. Which is not possible. Now, suppose $f|_{A\cap B}=0$. Define $g_0:A\cup B\to \mathbb{R}$ by $g_0=f$ on $A$ and $g_0=0$ on $B$. Let $g$ be a continuous extension of $g_0$ on $X$. Since $g|_B=0$ and $f-g|_A=0$ and $F$ is supported on both $A$ and $B$, we have $F(f)=F(g)=0$. Hence, $F$ is supported on $A\cap B$. 
	\end{proof}
	
	\begin{example} Let $\mathcal{B}=\{A\subseteq [a, \infty): a\in \mathbb{R}\}$. Define a continuous linear functional $F$ on $(C(X), \tau_{\mathcal{B}})$ by $F(f)=\int_{-1}^{0}q_B(f)dx$, where $B=[0, \infty)$. Then $F$ is supported on $[a, \infty)$ for all $a\in\mathbb{R}$ (as $[0, a]$ is compact for every $a>0$). Hence, $\bigcap \{B\in \mathcal{B}_0: F \text{ is supported on } B\}=\emptyset$.   
	\end{example} 
	
	We need the following lemma in order to derive the main theorem of this section.   
	\allowdisplaybreaks
	\begin{lemma}\label{2} Suppose $F\in (C(X), \tau_{\mathcal{B}})^*$. Then there exists a closed $B\in\mathcal{B}$ and a unique $F_B\in (C_b(B), \parallel\cdot\parallel_\infty)^*$ such that $F_B(f|_B)=F(f)$ for all $f\in C(X)_{fin}^B$, where $C_b(B)$ is the collection of all bounded, continuous functions on $B$ and $\parallel h\parallel_\infty=\sup_{x\in B}|h(x)|$ for all $h\in C_b(B)$. \end{lemma}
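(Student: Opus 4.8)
The plan is to use the set $B$ produced by Lemma \ref{support of a continuous linear functional} and then factor $F$ through the restriction map onto $B$. Concretely, by (the proof of) Lemma \ref{support of a continuous linear functional} there is a closed $B\in\mathcal{B}_0$ with $|F(f)|\le\rho_B(f)$ for all $f\in C(X)$, and in particular $F$ is supported on $B$. Fix this $B$; it will be the set in the statement. The point is that the datum of $F$ on the clopen subspace $C(X)_{fin}^B$ depends only on the values $f|_B$.

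First I would introduce the restriction map $r\colon C(X)_{fin}^B\to C_b(B)$, $r(f)=f|_B$. It is well defined, since $f\in C(X)_{fin}^B$ forces $f|_B$ to be continuous with $\|f|_B\|_\infty=\rho_B(f)<\infty$; it is obviously linear; and it is surjective: for $h\in C_b(B)$, the Tietze extension theorem (applicable as $B$ is closed in the metric, hence normal, space $X$) yields $f\in C(X)$ with $f|_B=h$, and then $\rho_B(f)=\|h\|_\infty<\infty$ shows $f\in C(X)_{fin}^B$ with $r(f)=h$. Next I would check $\ker r\subseteq\ker\bigl(F|_{C(X)_{fin}^B}\bigr)$: if $f\in C(X)_{fin}^B$ and $f|_B=0$, then $F(f)=0$ because $F$ is supported on $B$. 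Hence $F|_{C(X)_{fin}^B}$ factors uniquely through $r$, giving a unique linear functional $F_B$ on $C_b(B)$ with $F_B\circ r=F|_{C(X)_{fin}^B}$, i.e. $F_B(f|_B)=F(f)$ for all $f\in C(X)_{fin}^B$.

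For continuity of $F_B$: given $h\in C_b(B)$, choose (again by Tietze) $f\in C(X)_{fin}^B$ with $f|_B=h$; then $|F_B(h)|=|F(f)|\le\rho_B(f)=\|h\|_\infty$, so $F_B\in(C_b(B),\|\cdot\|_\infty)^*$ with operator norm at most $1$. Uniqueness of $F_B$ among all linear functionals satisfying the stated identity is immediate from surjectivity of $r$: if $G_B$ also satisfies $G_B(f|_B)=F(f)$ for every $f\in C(X)_{fin}^B$, then for arbitrary $h\in C_b(B)$ one picks $f$ with $r(f)=h$ and gets $G_B(h)=F(f)=F_B(h)$.

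This argument is essentially the standard ``factor a bounded functional through a quotient by a subspace contained in its kernel'' device, so there is no serious obstacle. The only genuinely external ingredient is the Tietze extension theorem, used twice (for surjectivity of $r$ and hence for both existence of the factorization and its uniqueness). The one place requiring care is that the identity $F_B(f|_B)=F(f)$ is asserted, and can only be asserted, on the clopen subspace $C(X)_{fin}^B$: for $f\notin C(X)_{fin}^B$ the restriction $f|_B$ is unbounded and thus not even an element of $C_b(B)$, which is exactly why the statement is phrased in terms of $C(X)_{fin}^B$.
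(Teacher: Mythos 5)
Your argument is correct and is essentially the paper's proof: the paper also takes $B$ with $|F|\le\rho_B$, defines $F_B(h)=F(\hat h)$ for any continuous (Tietze) extension $\hat h$ of $h$, and uses the fact that $F$ is supported on $B$ for well-definedness and the bound $|F_B(h)|\le\|h\|_\infty$ for continuity. Your phrasing via factoring through the restriction map $r$ is just a more explicit packaging of the same device, so there is nothing to add.
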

	
	\begin{proof} Let $B\in \mathcal{B}$ such that $|F|\leq \rho_B$. Define $F_B(h)=F(\hat{h})$ for all $h\in C_b(B)$, where $\hat{h}$ is any continuous extension of $h$ on $X$. Since $F$ is supported on $B$, $F_B$ is well-defined and unique. Note that $|F_B(f)|\leq \rho_B(f)=\sup_{x\in B}|f(x)|=\|f\|_\infty$ for all $f\in C_b(B)$. Thus, $F_B\in \left(C_b(B), \parallel\cdot\parallel_\infty\right)^*$. \end{proof}

	If $B\in\mathcal{B}$ and $C(X)=C(X)_{fin}^B\oplus M_B$, then we denote $p_B$ and $q_B$ by the projections of $C(X)$ onto $C(X)_{fin}^B$ and $M_B$, respectively.
	
	\begin{theorem}\label{decomposition of continuous linear functionals} Suppose $F\in (C(X), \tau_{\mathcal{B}})^*$. Then there exists a closed set $B\in \mathcal{B}$ such that $F=H_\mu\circ p_B+ F\circ q_B$ for some $\mu\in\mathscr{M}_{\mathcal{B}}$ supported on $B$, that is, $F(f)=\int_X fd\mu$ for all $f\in C(X)_{fin}^B$. Moreover, $\|\mu\|=\sup\{|F(f)|: \|f\|_\infty\leq 1\}$, where $\|\mu\|$ represents the total variation of $\mu$ which is defined by $\|\mu\|:=|\mu|(X)$.    
	\end{theorem}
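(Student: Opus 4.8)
The plan is to reduce $F$ to a single Riesz-representation problem on a fixed set and then transport the resulting measure back to $X$. First, by Lemma~\ref{support of a continuous linear functional} I would choose a closed $B\in\mathcal{B}_0$ on which $F$ is supported; continuity of $F$ on $(C(X),\tau_{\mathcal{B}})$ furnishes a constant $c>0$ with $|F(f)|\le c\,\rho_B(f)$ for every $f\in C(X)$, and Lemma~\ref{2} then yields a unique $F_B\in(C_b(B),\|\cdot\|_\infty)^*$ with $F_B(f|_B)=F(f)$ for all $f\in C(X)_{fin}^B$. At this stage the theorem is reduced to representing the bounded functional $F_B$ by a measure carried by $B$ and then to the attendant bookkeeping: producing $\mu$, checking $\mu\in\mathscr{M}_{\mathcal{B}}(X)$, verifying the decomposition, and computing $\|\mu\|$.

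Next I would apply the Riesz representation theorem to $F_B$. Since $B$ is a closed subspace of the metric space $(X,d)$ — and in the cases of principal interest, e.g.\ $\mathcal{B}\subseteq\mathcal{K}$, one may even take $B$ compact so that $C_b(B)=C(B)$ — there is a unique finite signed regular Borel measure $\nu$ on $B$ with $F_B(h)=\int_B h\,d\nu$ for all $h\in C_b(B)$ and $\|\nu\|=\|F_B\|$. I would then define $\mu$ on $\mathscr{B}(X)$ by $\mu(E)=\nu(E\cap B)$, i.e.\ the pushforward of $\nu$ under the inclusion $B\hookrightarrow X$; this is a finite Borel measure on $X$ (automatically closed regular on a metric space) with $|\mu|(X\setminus B)=0$, so $\mu\in\mathscr{M}_{\mathcal{B}}(X)$ and is supported on the closed set $B$. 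I expect this passage — ensuring that the abstract functional $F_B$ on $C_b(B)$ is genuinely given by a countably additive Borel measure carried by $B$ itself, rather than merely by a measure on a compactification of $B$ — to be the main obstacle; it is precisely what makes the compact-open case of \cite{bns} transparent, and it is where the structure of $\mathcal{B}$ must be used.

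Finally I would assemble the pieces. For $f\in C(X)_{fin}^B$ one computes $F(f)=F_B(f|_B)=\int_B f|_B\,d\nu=\int_X f\,d\mu=H_\mu(f)$, so $F$ agrees with $H_\mu\circ p_B$ on $C(X)_{fin}^B$; since both $F$ and $H_\mu\circ p_B+F\circ q_B$ are linear, agree on the clopen summand $C(X)_{fin}^B$ (where $p_B$ is the identity and $q_B$ vanishes) and on $M_B$ (where $p_B$ vanishes and $q_B$ is the identity), and $C(X)=C(X)_{fin}^B\oplus M_B$, they coincide on all of $C(X)$, giving $F=H_\mu\circ p_B+F\circ q_B$. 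For the norm identity, note that any $h\in C_b(B)$ with $\|h\|_\infty\le1$ extends, by the Tietze extension theorem followed by truncation to $[-1,1]$, to some $f\in C(X)$ with $\|f\|_\infty\le1$ and $f|_B=h$, while conversely any $f\in C(X)$ with $\|f\|_\infty\le1$ lies in $C(X)_{fin}^B$ with $\|f|_B\|_\infty\le1$; hence
$$\sup\{|F(f)|:\|f\|_\infty\le1\}=\sup\{|F_B(h)|:h\in C_b(B),\ \|h\|_\infty\le1\}=\|F_B\|=\|\nu\|=\|\mu\|,$$
which is the asserted equality.
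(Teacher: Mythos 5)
Your proposal tracks the paper's proof essentially step for step: choose a closed $B$ via the support lemma, reduce to a norm-bounded functional $F_B$ on $(C_b(B),\|\cdot\|_\infty)$ via Lemma~\ref{2}, represent $F_B$ by a measure $\nu$ on $B$, transport it to $\mu(E)=\nu(E\cap B)$ on $\mathscr{B}(X)$, and reassemble along the direct sum $C(X)=C(X)_{fin}^B\oplus M_B$. The step you flag as the main obstacle --- that $F_B$, a bounded functional on $C_b(B)$ for a possibly non-compact closed $B$, is given by a \emph{countably additive} regular Borel measure on $B$ rather than merely a finitely additive one (or a measure on a compactification of $B$) --- is asserted in the paper with no more justification than you supply, so you have not left a gap that the paper itself closes. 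The only real divergences are cosmetic and in your favor: you get closed regularity of $\mu$ from the general fact that finite Borel measures on metric spaces are closed regular (the paper transfers regularity from $\nu$ by an explicit $\epsilon$-argument), and you obtain $\sup\{|F(f)|:\|f\|_\infty\le 1\}=\|\mu\|$ from the Tietze-plus-truncation identification of the unit balls together with $\|F_B\|=\|\nu\|$, whereas the paper proves the nontrivial inequality directly by constructing a test function from the regularity of $\mu$; both routes are valid given the representation theorem being invoked.
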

	
	\begin{proof}By Lemma \ref{support of a continuous linear functional}, $F$ is supported on some closed $B\in\mathcal{B}$. By Lemma \ref{2}, there exists a unique $F_B\in (C_b(B), \parallel\cdot\parallel_\infty)^*$ such that $F_{B}(f|_B)=F(f)$ for all $f\in C(X)_{fin}^B$. There exists a closed regular, finite sigma measure $\nu$ on $\mathscr{B}(B)$ such that $F_B(f|_B)=\int_{B}f|_Bd\nu$ for all $f\in C(X)_{fin}^B$. Since $\mathscr{B}(B)= \{A\cap B: A\in \mathscr{B}(X)\}$, $\mu(A):=\nu(A\cap B)$ defines a finite measure on $\mathscr{B}(X)$. It is easy to see that $\mu$ is supported on $B$. We now show that $\mu$ is closed regular. Let $A\in \mathscr{B}(X)$. Then $A\cap B\in \mathscr{B}(B)$. Therefore, for $\epsilon>0$, there exist a closed set $C$ and an open set in $U$ in $B$ such that $C\subseteq A\cap B\subseteq U$ and $|\nu|(U\setminus C)<\epsilon$. Since $B$ is closed in $X$, $C$ is closed in $X$. Take $V=U\cup (X\setminus B)$. Then $V$ is open in $X$. It is easy to see that  $C\subseteq A\subseteq V$, and $|\mu|(V\setminus C)=|\mu|((U\setminus C)\cap B)=|\nu|(U\setminus C)<\epsilon.$ Thus, $\mu$ is closed regular. Note that $F(f)=F_B(f|_B)=\int_Bf|_Bd\nu=\int_Xfd\mu$ for all $f\in C(X)_{fin}^B$. Hence, $F=H_\mu\circ p_B+H_2\circ q_B$ for $H_2=F|_{M_B}$.
		
		It is easy to see that $\sup\{|F(f)|: \|f\|_\infty\leq 1\}\leq |\mu|(X)= |\mu|(B)$. Since $\mu$ is closed regular, for $\epsilon>0$, we have a closed set $C$ and an open set $U$ in $X$ such that $C\subseteq B\subseteq U$ and $|\mu|\left(U\setminus C\right) <\epsilon$. Since $C\cap \left(X\setminus U\right)=\emptyset$, we have a continuous function $h$ on $X$ such that $h|_{C}= \frac{\epsilon}{|\mu(B)|+1}$, $h|_{X\setminus U}=0$, and $\|h\|_\infty \leq 1$.  Note that 
		\begin{eqnarray*}
			\left|\int_X hd\mu-|\mu|(B)\right|&\leq& \left|\int_C hd\mu+\int_{U\setminus C} hd\mu-|\mu|(B)\right|\\
			&\leq& \frac{\epsilon|\mu|(C)}{|\mu|(B)+1}+|\mu|\left(U\setminus C\right)+|\mu|\left(U\setminus C\right)\\
			&\leq& \frac{\epsilon|\mu|(B)}{|\mu|(B)+1}+2|\mu|\left(U\setminus C\right) \leq  3\epsilon.
		\end{eqnarray*}
		Thus, $\left|\int_Xhd\mu-|\mu|(B)\right|\leq 3\epsilon$. Consequently, 	
		\begin{eqnarray*}
			|\mu|(B)+3\epsilon &\leq& \left|\int_Xhd\mu\right|\\
			&\leq& \sup\{|F(f)|: \|f\|_\infty\leq 1\}.  
		\end{eqnarray*}
		Hence, $\|\mu\|=\sup\{|F(f)|: \|f\|_\infty\leq 1\}$.  \end{proof}

	\begin{remark}Suppose $\mathcal{B}=\mathcal{K}$, the collection of all non-empty compact subsets of $X$. Then for each $B\in\mathcal{B}$, $\rho_B$ is a seminorm on $C(X)$. Which implies that $M_B=\{0\}$. Consequently, $q_B=0$. Hence, every continuous linear functional $F$ on $(C(X), \tau_k)$ has the form $F=H_\mu$ for some finite Borel measure $\mu$ supported on some compact subset of $X$. Since $C_b(X)$ is dense in $(C(X), \tau_{\mathcal{B}})$, the function $$\|F\|_{op}=\sup\{|F(f)|: \|f\|_\infty\leq 1\}$$ defines a norm on $(C(X), \tau_{\mathcal{B}})^*$. As $\|F\|_{op}=\|\mu\|$, $((C(X), \tau_{\mathcal{B}})^*, \|\cdot\|_{op})$ is isometrically isomorphic to $(\mathscr{M}_{\mathcal{K}}, \|\cdot\|)$.   
	\end{remark}
	
	\begin{corollary} Suppose $\mathcal{B}\subseteq \mathcal{K}$. Then $((C(X), \tau_{\mathcal{B}})^*, \|\cdot\|_{op})$ is isometrically isomorphic to $(\mathscr{M}_{\mathcal{B}}, \|\cdot\|)$.
	\end{corollary}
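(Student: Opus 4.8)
The plan is to reduce to the situation of the preceding remark by observing that the hypothesis $\mathcal{B}\subseteq\mathcal{K}$ forces every extended seminorm $\rho_B$, for $B$ in the (assumed) closed base $\mathcal{B}_0$, to be finite-valued, hence an ordinary seminorm. Indeed, each $B\in\mathcal{B}_0$ is closed and relatively compact, hence compact, so $\sup_{x\in B}|f(x)|<\infty$ for every $f\in C(X)$; that is, $C(X)_{fin}^B=C(X)$. Consequently $M_B=\{0\}$, $q_B=0$, and $p_B=\mathrm{id}_{C(X)}$, exactly as in the case $\mathcal{B}=\mathcal{K}$.

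Next I would introduce the candidate map $\Phi\colon\big((C(X),\tau_{\mathcal{B}})^*,\|\cdot\|_{op}\big)\to(\mathscr{M}_{\mathcal{B}},\|\cdot\|)$ sending $F$ to the measure $\mu$ furnished by Theorem \ref{decomposition of continuous linear functionals}. Since $q_B=0$, that theorem gives $F=H_\mu$, i.e. $F(f)=\int_X f\,d\mu$ for all $f\in C(X)$, together with the equality $\|\mu\|=\sup\{|F(f)|:\|f\|_\infty\le 1\}=\|F\|_{op}$; this last identity is precisely the desired isometry statement (and, along the way, shows $\|F\|_{op}<\infty$, so $\|\cdot\|_{op}$ is at least finite-valued on the dual). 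Linearity of $\Phi$ is immediate from $H_{\mu_1+c\mu_2}=H_{\mu_1}+cH_{\mu_2}$, and injectivity is clear since $\Phi(F)=0$ forces $F=H_0=0$.

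For well-definedness I need the representing measure to be independent of the choices made in Theorem \ref{decomposition of continuous linear functionals}: if $\mu_1,\mu_2\in\mathscr{M}_{\mathcal{B}}$ are supported on compact sets $B_1,B_2$ and $\int_X f\,d\mu_1=\int_X f\,d\mu_2$ for all $f\in C(X)$, then restricting to the compact set $B_1\cup B_2$ and invoking the uniqueness clause of the Riesz representation theorem on $C(B_1\cup B_2)$ yields $\mu_1=\mu_2$; this is where closed regularity of the measures enters. Surjectivity follows from Proposition \ref{every measure gives a continuous map}: any $\mu\in\mathscr{M}_{\mathcal{B}}$ supported on closed (hence compact) $B\in\mathcal{B}$ gives $H_\mu$ continuous on $\big(C(X)_{fin}^B,\tau_{\mathcal{B}}|_{C(X)_{fin}^B}\big)=(C(X),\tau_{\mathcal{B}})$, so $H_\mu\in(C(X),\tau_{\mathcal{B}})^*$ and $\Phi(H_\mu)=\mu$.

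I do not anticipate a real obstacle here: the corollary is essentially the quoted remark with $\mathcal{K}$ replaced by a subbornology, and the only points requiring care are the reduction ``$B$ compact $\Rightarrow\rho_B$ an honest seminorm'' and the standard uniqueness of the representing measure. Once these are in place, the isometry $\|\Phi(F)\|=\|\mu\|=\|F\|_{op}$ is handed to us verbatim by Theorem \ref{decomposition of continuous linear functionals}.
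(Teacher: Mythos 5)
Your proposal is correct and follows essentially the same route as the paper: the corollary is stated without proof precisely because it is the preceding remark's argument ($B$ compact $\Rightarrow \rho_B$ finite $\Rightarrow M_B=\{0\}$, $q_B=0$, so $F=H_\mu$ with $\|F\|_{op}=\|\mu\|$ by Theorem \ref{decomposition of continuous linear functionals}) applied verbatim to a subbornology of $\mathcal{K}$. Your added care about well-definedness via Riesz uniqueness and surjectivity via Proposition \ref{every measure gives a continuous map} fills in exactly the details the paper leaves implicit.
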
	
	
	\section{Comparison between the duals of $(C(X), \tau_{\mathcal{B}})$ and $(C(X), \tau_{\mathcal{B}}^s)$}
	
	As mentioned earlier an extended locally convex space (elcs) may not form a topological vector space. Therefore, we may not directly apply the classical techniques of locally convex spaces to these new spaces. To address this problem, the authors in \cite{flctopology} constructed the finest locally convex topology for an elcs which is still coarser than the given extended locally convex space topology (they called this topology by flc topology ($\tau_F$)). It is also shown in the same paper that if $\tau_F$ and $\tau_F^s$ are the flc topologies for $(C(X), \tau_{\mathcal{B}})$ and $(C(X), \tau_{\mathcal{B}}^s)$, respectively, then $\tau_F$ coincides with $\tau_F^s$ if and only if $\mathcal{B}$ is shielded from closed sets. A natural question one can ask here is when both the spaces have the same weak topologies? Which is equivalent to asking when both the spaces have the same dual? We show that the inclusion $(C(X), \tau_{\mathcal{B}}) \subseteq (C(X), \tau_{\mathcal{B}}^s)$ may be strict. We also find the condition under which both the spaces have the same dual.   
	
	\begin{lemma}\label{support of a continuous linear functional on tauBs} Suppose $0\neq F\in (C(X), \tau_{\mathcal{B}}^s)^*$. Then there exists a closed $B\in \mathcal{B}$ such that $F$ is supported on $B^\delta$ for every $\delta>0$. 
	\end{lemma}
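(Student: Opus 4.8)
The plan is to exploit directly the defining property of $\tau_{\mathcal{B}}^s$: continuity of $F$ forces $|F|$ to be dominated by a single extended seminorm $\rho_B^s$, and $\rho_B^s$ annihilates every function that already vanishes on some enlargement $B^\delta$. So the whole argument reduces to two short observations, one about continuity and one about the shape of $\rho_B^s$.

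First I would use continuity of $F$ at the origin together with the description of the neighbourhood base $\mathcal{N}^s$ to produce a closed $B\in\mathcal{B}_0$ and $r>0$ with $\rho_B^s(f)<r\implies|F(f)|<1$. (We may take $B$ in the closed base $\mathcal{B}_0$, since $\mathcal{B}_0$ is cofinal in $\mathcal{B}$ and $\rho_{B'}^s\le\rho_B^s$ whenever $B'\subseteq B$; alternatively one replaces $B$ by $\overline{B}$ using $\rho_{\overline{B}}^s=\rho_B^s$.) Then the usual homogeneity trick — applying the bound to $nf$ for each $n\in\mathbb{N}$ — upgrades this to the implication $\rho_B^s(f)=0\implies F(f)=0$, which is all that is needed from continuity.

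Next, fix $\delta>0$ and suppose $f\in C(X)$ satisfies $f|_{B^\delta}=0$. For every $0<\delta'<\delta$ we have $B^{\delta'}\subseteq B^\delta$, hence $\sup_{x\in B^{\delta'}}|f(x)|=0$, and therefore
$$\rho_B^s(f)=\inf_{\eta>0}\ \sup_{x\in B^\eta}|f(x)|\ \le\ \sup_{x\in B^{\delta'}}|f(x)|=0.$$
Combining with the previous paragraph gives $F(f)=0$. Since $\delta>0$ was arbitrary, $F$ is supported on $B^\delta$ for every $\delta>0$, which is the assertion. (The hypothesis $F\neq 0$ is not actually used to reach this conclusion, but it ensures each $B^\delta$ is a genuine support of a nonzero functional, consistent with the finite–intersection discussion around $\mathscr{S}_F$.)

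There is no serious obstacle here; the only two points that require a little care are (i) handling the extended-seminorm bound correctly when $\rho_B^s$ may take the value $\infty$ — which is sidestepped because we invoke it only on functions $f$ with $\rho_B^s(f)=0$, where homogeneity is unproblematic — and (ii) arranging that the set $B$ furnished by continuity is closed, which follows from the standing assumption that $\mathcal{B}$ has a closed base (equivalently, from $\rho_{\overline{B}}^s=\rho_B^s$).
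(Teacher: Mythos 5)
Your argument is correct and is essentially the paper's: the paper proves this by noting (as in Lemma \ref{support of a continuous linear functional}) that continuity yields a closed $B\in\mathcal{B}_0$ with $|F|\leq\rho_B^s$, and that $f|_{B^\delta}=0$ forces $\rho_B^s(f)=0$, hence $F(f)=0$. Your derivation of the implication $\rho_B^s(f)=0\Rightarrow F(f)=0$ from the neighbourhood base plus homogeneity is just a slightly more explicit route to the same domination, so there is nothing to add.
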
 
	
	\begin{proof} It is similar to the proof of Lemma \ref{support of a continuous linear functional}.
	\end{proof}

	Recall from \cite{Suc} that a superset $A_1$ of $A$ is said to be a \textit{shield} for $A$ if for every non-empty closed set $C\subseteq X$ with $C\cap A_1=\emptyset$, $C$ is not near to $A$, that is, $C\cup A^\delta=\emptyset$ for some $\delta>0$. A bornology $\mathcal{B}$ is said to be \textit{shielded from closed sets} if every member of $\mathcal{B}$ has a closed shield in $\mathcal{B}$.  
	
	The next example shows that, in general, $(C(X), \tau_{\mathcal{B}})^*$ may not coincide with $(C(X), \tau_{\mathcal{B}}^s)^*$. 
	
	\begin{example}\label{dual of both spaces may not coincide}\normalfont Suppose $X=\mathbb{R}$ and $\mathcal{B}$ is the bornology induced by the base $$\mathcal{B}_0=\{\mathbb{N}\cup A: A \text{ is any non-empty finite subset of } \mathbb{R}\}.$$ Note that if $A$ is any finite subset of $\mathbb{R}$. Then there exists a $k>3$ such that $A\cap [k, \infty)=\emptyset$. Since an arbitrary union of a locally finite family of closed sets is closed (see, Corollary 1.1.12, p. 17 in \cite{GTE}), $C:=\bigcup_{n\geq k}\left[n+\frac{1}{n}, n+1-\frac{1}{n}\right]$ is closed in $\mathbb{R}$. It is easy to prove that $C\cap (\mathbb{N}\cup A)=\emptyset$ and $\mathbb{N}^\delta \cap C\neq \emptyset$ for all $\delta>0$. So, $\mathcal{B}$ is not shielded from closed sets. Thus, $\tau_{\mathcal{B}}\subsetneq \tau_{\mathcal{B}}^s$ on $C(X)$.
		
		We now construct an $F\in (C(X), \tau_{\mathcal{B}}^s)^*\setminus(C(X), \tau_{\mathcal{B}})^*$.  Let $B=\mathbb{N}$. Then $\rho_B^s$ is continuous on $(C(X), \tau_{\mathcal{B}}^s)$. Consequently, $C(X)_{fin}^{B_s}=\{f\in C(X): (\rho_B^s)(f)<\infty\}$ is clopen in $(C(X), \tau_{\mathcal{B}}^s)$. Define an $h\in C(X)$ by 
		\[ h(x)=
		\begin{cases}
		\text{$0,$} &\quad\text{if $x\leq 1$}\\
		\text{$n(n+1)(x-n)$,} &\quad\text{if $n\leq x\leq n+\frac{1}{n+1}$ for $n\in \mathbb{N}$}\\
		\text{$(n+1)(n+1-x),$} &\quad\text{if $n+\frac{1}{n+1}\leq x\leq n+1$ for $n\in\mathbb{N}$}.\\
		\end{cases}\]	
		
		Clearly, $h(n)=0$ for all $n\in \mathbb{N}$. Suppose $\delta>0$. Then for every positive integer $m$ with $\frac{1}{m}<\delta$, we have $m+\frac{1}{m+1}\in B^\delta$ and $h(m+\frac{1}{m+1})=m$. Consequently, $\sup_{x\in B^\delta}|h(x)|=\infty$. Since this holds for all $\delta>0$, $\rho_B^s(h)=\infty$. Therefore, $h\notin C(X)_{fin}^{B_s}$. Suppose $M_{B_s}$ is any algebraic complement of $C(X)_{fin}^{Bs}$ containing $h$. Then $C(X)=C(X)_{fin}^{B_s}\oplus M_{B_s}$ becomes a topological direct sum. Consider the linear function $F$ defined by $$F(f)=\int_{1}^{\frac{3}{2}}q_{B_s}(f)(x)dx \text{ for all } f\in C(X),$$ where $q_{B_s}$ is the projection of $C(X)$ on $M_{B_s}$. Note that $|F(f)|\leq \rho_B^s(f)$ for all $f\in C(X)$. Therefore, $F\in (C(X), \tau_{\mathcal{B}}^s)^*$. It should be noted that $F$ is supported on $\mathbb{N}^\delta$ for every $\delta>0$. Let $F\in (C(X), \tau_{\mathcal{B}})^*$. By Lemma \ref{support of a continuous linear functional}, there exists a finite set $A\subseteq \mathbb{R}$ such that $F$ is supported on $\mathbb{N}\cup A$. Consider a bounded function $g\in C(X)$ such that $g|_{A\cup \mathbb{N}}=h|_{A\cup\mathbb{N}}$. It is easy to see that $g\in C(X)_{fin}^{B_s}$ and $g-h|_{A\cup\mathbb{N}}=0$. Since $q_{B_s}(g-h)=-h$, $F(g-h)=-\int_{1}^{\frac{3}{2}}h(x)dx\neq 0$. We arrive at a contradiction. 
	\end{example} 
	
	\begin{theorem}\label{condition under which both the spaces have same dual} Suppose $\mathcal{B}$ is a bornology on $X$ with a closed base $\mathcal{B}_0$. Then the following statements are equivalent:
		\begin{enumerate}[(1)]
			\item $\tau_{\mathcal{B}}=\tau_{\mathcal{B}}^s$ on $C(X)$;
			\item $\mathcal{B}$ is shielded from closed sets;
			\item $(C(X), \tau_{\mathcal{B}})^*=(C(X), \tau_{\mathcal{B}}^s)^*$.
	\end{enumerate}	\end{theorem}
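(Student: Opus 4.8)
The plan is to prove the cycle of implications $(1)\Rightarrow(3)\Rightarrow(2)\Rightarrow(1)$, of which only the middle one is substantial. The implication $(1)\Rightarrow(3)$ is immediate, since equal topologies have the same continuous linear functionals. For $(2)\Rightarrow(1)$ I would use that a closed shield dominates the associated strong seminorm: if $B\in\mathcal B_0$ has a closed shield $B^{*}\in\mathcal B$, then for $f\in C(X)$ and $\epsilon>0$ the closed set $\{x\in X:|f(x)|\ge\rho_{B^{*}}(f)+\epsilon\}$ is disjoint from $B^{*}$, hence disjoint from $B^{\delta}$ for some $\delta>0$, so that $\sup_{x\in B^{\delta}}|f(x)|\le\rho_{B^{*}}(f)+\epsilon$ and thus $\rho_B^s(f)\le\rho_{B^{*}}(f)+\epsilon$; letting $\epsilon\downarrow 0$ gives $\rho_B^s\le\rho_{B^{*}}$. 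Hence every basic $\tau_{\mathcal B}^s$-neighbourhood of $0$ contains a basic $\tau_{\mathcal B}$-neighbourhood of $0$, so $\tau_{\mathcal B}^s\subseteq\tau_{\mathcal B}$, and combined with the automatic inclusion $\tau_{\mathcal B}\subseteq\tau_{\mathcal B}^s$ this yields $\tau_{\mathcal B}=\tau_{\mathcal B}^s$ (compare \cite{Suc}).

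The heart of the theorem is $(3)\Rightarrow(2)$, which I would establish in contrapositive form: \emph{if $\mathcal B$ is not shielded from closed sets, then $(C(X),\tau_{\mathcal B})^{*}\subsetneq(C(X),\tau_{\mathcal B}^s)^{*}$}, the inclusion being automatic from $\tau_{\mathcal B}\subseteq\tau_{\mathcal B}^s$. First fix, using that $\mathcal B_0$ is a base, a closed $B\in\mathcal B_0$ with no closed shield in $\mathcal B$; unwinding the definition, for every closed $A\in\mathcal B$ with $B\subseteq A$ there is a nonempty closed $C_A$ with $C_A\cap A=\emptyset$ and $C_A\cap B^{\delta}\neq\emptyset$ for all $\delta>0$. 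I would record the key observation that any sequence in such a $C_A$ whose distances to $B$ tend to $0$ has no cluster point in $X$: a cluster point would lie both in the closed set $C_A$ and in $\overline{B}=B$, contradicting $C_A\cap B=\emptyset$. The rest of the argument then splits into three parts: (A) construct $h\in C(X)$ with $\rho_B^s(h)=\infty$ but $\rho_{B'}(h)<\infty$ for every $B'\in\mathcal B_0$; (B) produce from $h$ a functional $F\in(C(X),\tau_{\mathcal B}^s)^{*}$; and (C) show $F\notin(C(X),\tau_{\mathcal B})^{*}$.

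Part (A) is the main obstacle. Generalising the construction in Example \ref{dual of both spaces may not coincide}, the idea is to place a sequence of ever-taller, thin bumps centred at points $x_n$, chosen from the witnessing sets $C_A$ with $x_n$ at distance less than $1/n$ from $B$, arranged — by a recursive construction exploiting these $C_A$ and the non-clustering observation above — so that $\{x_n\}$ has no cluster point in $X$ and escapes the bornology, in the sense that every member of $\mathcal B_0$ contains only finitely many $x_n$ and meets only finitely many of the bump supports. The resulting $h$ is then a genuine continuous function, it satisfies $\rho_B^s(h)=\infty$ (since $x_n\in B^{\delta}$ once $1/n<\delta$, while the bump heights tend to $\infty$), and $\rho_{B'}(h)<\infty$ for every $B'\in\mathcal B_0$. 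The delicate point — and the only place where non-shieldedness is genuinely used — is arranging the bump centres to approach $B$ and simultaneously to escape $\mathcal B$ without accumulating, uniformly over all of $\mathcal B$; everything after this is soft.

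For part (B): since $\rho_B^s$ is a $\tau_{\mathcal B}^s$-continuous extended seminorm, $C(X)_{fin}^{B_s}$ is a clopen subspace of $(C(X),\tau_{\mathcal B}^s)$ which, by (A), does not contain $h$. Write $C(X)=C(X)_{fin}^{B_s}\oplus M$ with $h\in M$; then $M$ is a discrete subspace, the decomposition is a topological direct sum, and the projection $q$ of $C(X)$ onto $M$ is continuous. Choose a linear functional $\phi$ on $M$ with $\phi(h)=1$ and set $F=\phi\circ q$; since $F$ vanishes on the clopen subspace $C(X)_{fin}^{B_s}$ and is automatically continuous on the discrete subspace $M$, we obtain $F\in(C(X),\tau_{\mathcal B}^s)^{*}$. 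For part (C): suppose $F$ were $\tau_{\mathcal B}$-continuous. By Lemma \ref{support of a continuous linear functional}, $F$ would be supported on a closed $A\in\mathcal B$, and replacing $A$ by $A\cup B$ we may assume $B\subseteq A$. Since $A$ lies in a member of $\mathcal B_0$, part (A) gives $\rho_A(h)<\infty$, so by the Tietze extension theorem $h|_A$ extends to a bounded $g\in C(X)$. Being bounded, $g$ lies in $C(X)_{fin}^{B_s}$, so $q(g)=0$ and $F(g)=0$; on the other hand $(g-h)|_A=0$ and $F$ is supported on $A$, so $F(g)=F(h)=\phi(q(h))=\phi(h)=1$, a contradiction. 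Hence $F\in(C(X),\tau_{\mathcal B}^s)^{*}\setminus(C(X),\tau_{\mathcal B})^{*}$, which completes $(3)\Rightarrow(2)$ and the cycle.
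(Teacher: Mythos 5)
Your implications $(1)\Rightarrow(3)$ and $(2)\Rightarrow(1)$ are fine (the latter is essentially Theorem 4.1 of \cite{Suc}, which the paper simply cites), and parts (B) and (C) of your argument for $(3)\Rightarrow(2)$ work \emph{conditional on} part (A). The gap is part (A) itself: the existence of a single $h\in C(X)$ with $\rho_B^s(h)=\infty$ yet $\rho_{B'}(h)<\infty$ for \emph{every} $B'\in\mathcal B_0$ is exactly the point you defer to ``a recursive construction,'' and it is false in general. Take $X=\{(n,0):n\in\mathbb N\}\cup\{(n,\frac1m):n,m\in\mathbb N\}\subseteq\mathbb R^2$, $B=\{(n,0):n\in\mathbb N\}$, and let $\mathcal B$ have the closed base $\{B\cup S_g: g\colon\mathbb N\to\mathbb N\}$, where $S_g=\{(n,\frac1m):m\le g(n)\}$. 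This $\mathcal B$ is not shielded from closed sets: every closed $A\in\mathcal B$ containing $B$ lies in some $B\cup S_g$, and the nonempty closed set $C_g=\{(n,\frac{1}{g(n)+n}):n\in\mathbb N\}$ misses $B\cup S_g$ while meeting every $B^\delta$. Yet no $h$ as in (A) exists, for $B$ or for any other member of $\mathcal B$: if $h\in C(X)$ is bounded on every member of $\mathcal B$, then $M_n:=\sup_m|h(n,\frac1m)|$ is finite (each column together with its limit $(n,0)$ is compact), and choosing $g(n)$ with $|h(n,\frac{1}{g(n)})|\ge M_n-1$, boundedness of $h$ on $S_g$ forces $\sup_n M_n<\infty$; combined with boundedness on $B$ this makes $h$ bounded on all of $X$, whence $\rho_{B'}^s(h)<\infty$ for every $B'\in\mathcal B$. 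So your strategy cannot even get started in this example. The underlying obstruction is the order of quantifiers: your $h$ must be fixed before $F$ is defined, hence must evade all members of $\mathcal B$ simultaneously (possibly uncountably many, with no countable base), and the witnesses $C_A$ of non-shieldedness give no control over that.

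The paper's proof avoids this by reversing the order. It defines $F=G\circ q_{B_0s}$ for a linear functional $G$ on the discrete complement $M_{B_0s}$ of $C(X)_{fin}^{B_0s}$, assumes $F$ is $\tau_{\mathcal B}$-continuous so as to obtain a \emph{specific} support set $A$ via Lemma \ref{support of a continuous linear functional}, sets $D=A\cup B_0$, and only then builds $h$ with $h|_{D}=0$ and $h(x_n)=n$ along a single closed discrete sequence $x_n\in C_D\cap B_0^{1/n}$ drawn from the one witnessing set $C_D$. Such an $h$ need only vanish on $D$ and be unbounded on each $B_0^{\delta}$ --- no uniform bound over all of $\mathcal B$ is required --- and it already gives $q_{B_0s}(h)\neq0$, hence $F(h)\neq0$ for the paper's choice of $G$, while $h|_A=0$ contradicts the fact that $F$ is supported on $A$. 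To salvage your version you must likewise let $h$ depend on the support $A$ of the putative $\tau_{\mathcal B}$-continuous functional, rather than construct it in advance.
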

	
	\begin{proof} The implications (1)$\Leftrightarrow$(2)$\Rightarrow$(3) follow from Theorem 4.1 in \cite{Suc}.
		
		(3)$\Rightarrow$(2). Suppose $B_0\in\mathcal{B}$ does not have any closed shield in $\mathcal{B}$, that is, for all closed set $B\in\mathcal{B}$ with $B_0\subseteq B$, there exists a closed set $C_B$ such that $C_B\cap B=\emptyset$ and $C_B\cap B_0^\delta \neq \emptyset$ for all $\delta >0$. Let $C(X)=C(X)_{fin}^{B_0s}\oplus M_{B_0s}$. Consider the linear functional $F$ on $C(X)$ defined by $F(f):=G(q_{B_0s}(f)),$ where $G$ is any linear functional on $M_{B_0s}$ such that $G(h)=0$ only if $h=0$. Note that $|F(f)|\leq \rho_{B_0}^s(f)$ for all $f\in C(X)$. Thus, $F\in (C(X), \tau_{\mathcal{B}}^s)^*$. By our hypothesis, $F\in (C(X), \tau_{\mathcal{B}})^*$. By Lemma \ref{support of a continuous linear functional}, $F$ is supported on some closed set $A\in\mathcal{B}$. Take $D=A\cup B_0$. Clearly, $D\in\mathcal{B}$ and $B_0\subseteq D$. Since $B_0$ does not have any closed shield in $\mathcal{B}$, there exists a closed set $C_D$ such that $C_D\cap D=\emptyset$ and $C_D\cap B_0^{\delta}\neq \emptyset$ for all $\delta>0$. Let $x_n\in C_D\cap B_0^{\frac{1}{n}}$. Note that if $x_{n_k}$ is any subsequence of $(x_n)$ which converges to $x$, then there exists a sequence $(b_{n_k})$ in $B_0$ such that $d(x_{n_k}, b_{n_k})<\frac{1}{n_k}\to 0$. Thus, $x\in B_0\cap D$. Which is not possible. Therefore, no subsequence of $(x_n)$ is convergent in $X$. Thus,   $P=\{x_n:n\in\mathbb{N}\}$ is a closed, discrete subset of $X$. We can always find an $h\in C(X)$ such that $h|_D=0$ and $h(x_n)=n$ for all $n\in\mathbb{N}$. Consequently, $h|_A=0$ and $q_{B_0s}(h)\neq 0$. Thus, $h|_A=0$ and $F(h)\neq 0$. We arrive at a contradiction as $F$ is supported on $A$. \end{proof}

	\section{Topology of uniform convergence on bounded sets}
	
	In section 2, we have noted that $((C(X), \tau_k)^*, \|\cdot\|_{op})$ is isometrically isomorphic to $(\mathscr{M}_{\mathcal{K}}, \|\cdot\|)$, where $\tau_k$ is the compact open topology on $C(X)$.  However, the $\|\cdot\|_{op}$ shows no relation with $\mathcal{B}$. In the case of a Hausdorff locally convex space $(Z, \tau)$,  the generalization of the operator norm topology is the strong topology, which is the topology of uniform convergence on bounded subsets of $Z$. The suitable adaptation of this topology for an elcs has been studied in \cite{doelcs, belcsaubp, Relcs} (they denoted this topology by $\tau_{ucb}$).

	In this section, we first compare $\tau_{ucb}$ with $\|\cdot\|_{op}$. We then identify the conditions under which $\tau_{ucb}$ is normable, and derive a topology on $\mathscr{M}_\mathcal{B}$ that shares a connection with $(C(X), \tau_{\mathcal{B}})^*$ when endowed with $\tau_{ucb}$. We begin with the following definition.        
	
	\begin{definition}\normalfont(\cite{doelcs})	Suppose $(Z, \tau)$ is an elcs. Then $A\subseteq Z$ is said to be \textit{bounded} in $(Z, \tau)$ if for every neighborhood $U$ of $0$, there exist an $r>0$ and a finite set $F\subseteq Z$ such that $A\subseteq F+rU$. \end{definition}
	
	\noindent The following points about bounded subsets of an elcs $(Z, \tau)$ are either easy to verify or given in \cite{doelcs}.
	\begin{enumerate}[(1)]
		\item Every finite subset of $Z$ is bounded.
		\item If $(Z, \tau)$ is a locally convex space, then $A\subseteq Z$ is bounded if and only if for every neighborhood $U$ of $0$, there exists an $r>0$ such that $A\subseteq rU$.
		\item The collection of all bounded sets forms a bornology on $Z$.
		\item If $A$ is bounded, then for all $f\in Z^*$, $f(A)$ is bounded. The converse of this holds if $(Z, \tau)$ forms a locally convex space. 
	\end{enumerate}

	\begin{definition}\label{def of uniform convergence on bounded sets}\normalfont(\cite{doelcs}) Let $(Z, \tau)$ be an elcs. Then the topology $\tau_{ucb}$, on $Z^*$, of \textit{uniform convergence on bounded subsets of} $(Z, \tau)$ is induced by the collection $\mathcal{P}=\{\rho_A: A ~\text{is a bounded subset of}~ (Z, \tau)\}$ of seminorms on $Z^*$, where $\rho_A(F)=\sup_{x\in A}|F(x)| ~\text{for all}~ x\in Z^*$.\end{definition}
	
	\noindent The following points about $\tau_{ucb}$ are given in \cite{doelcs}. 
	\begin{enumerate}[(1)]
		\item The space $(Z^*, \tau_{ucb})$ forms a locally convex space.
		\item  The collection $\mathcal{N}=\{A^\circ: A \text{ is bounded in } (Z, \tau)\}$ gives a neighborhood base at $0$ in $(Z^*, \tau_{ucb})$, where $A^\circ:=\{F\in Z^*: |F(x)|\leq 1 \text{ for all } x\in A\}$.
		\item For an lcs $(Z, \tau)$, $\tau_{ucb}$ coincides with its strong topology. 
	\end{enumerate}
	
	\begin{proposition}\label{op forms a norm}\normalfont The map $\|\cdot\|_{op}$ defines a norm on $(C(X), \tau_{\mathcal{B}})^*$ if and only if $\mathcal{B}\subseteq \mathcal{K}$. 
	\end{proposition}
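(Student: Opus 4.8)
The plan is to deduce the forward implication from the corollary just established, and to prove the converse by contraposition, constructing a non-zero element of $(C(X),\tau_{\mathcal{B}})^*$ on which $\|\cdot\|_{op}$ vanishes.

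First, suppose $\mathcal{B}\subseteq\mathcal{K}$. Then by the corollary above $((C(X),\tau_{\mathcal{B}})^*,\|\cdot\|_{op})$ is isometrically isomorphic to $(\mathscr{M}_{\mathcal{B}},\|\cdot\|)$, and since the total variation is a genuine, finite norm on $\mathscr{M}_{\mathcal{B}}$, so is $\|\cdot\|_{op}$ on $(C(X),\tau_{\mathcal{B}})^*$. One could also argue directly: when $\mathcal{B}\subseteq\mathcal{K}$, every $f\in C(X)$ is bounded on $\overline{B}$ for each $B\in\mathcal{B}$, so $\rho_B$ is finite-valued, $C(X)_{fin}^B=C(X)$ and $M_B=\{0\}$; a truncation argument then shows $C_b(X)$ is $\tau_{\mathcal{B}}$-dense in $C(X)$, and $\|F\|_{op}=0$ forces $F$ to vanish on this dense subspace, hence $F=0$.

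For the converse I would argue the contrapositive. Assume $\mathcal{B}\not\subseteq\mathcal{K}$, pick $A\in\mathcal{B}$ with $\overline{A}$ non-compact, and choose a closed $B\in\mathcal{B}_0$ with $A\subseteq B$; then $B$ is closed and non-compact, for otherwise $\overline{A}$, a closed subset of the compact set $B$, would be compact. Since the metric space $B$ is non-compact, I can select a sequence $(b_n)$ in $B$ with no subsequence convergent in $B$, hence — $B$ being closed in $X$ — none convergent in $X$; passing to a subsequence the $b_n$ may be taken distinct, so that $P=\{b_n:n\in\mathbb{N}\}$ is a closed, discrete subset of $X$. By the Tietze extension theorem I would take $h\in C(X)$ with $h(b_n)=n$ for all $n$, so that $\rho_B(h)\geq\sup_n|h(b_n)|=\infty$; thus $C(X)_{fin}^B\subsetneq C(X)$, and therefore $M_B\neq\{0\}$ in the algebraic decomposition $C(X)=C(X)_{fin}^B\oplus M_B$.

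Then I would fix a non-zero linear functional $\phi$ on $M_B$ and set $F:=\phi\circ q_B$. As recorded in Section 2, the composition $\phi\circ q_B$ is $\tau_{\mathcal{B}}$-continuous for every linear functional $\phi$ on $M_B$ (taking $\mu=0$ in the decomposition argument there), so $F\in(C(X),\tau_{\mathcal{B}})^*$, and $F\neq0$ because $q_B$ maps onto $M_B$. Finally, every $f\in C(X)$ with $\|f\|_\infty\leq1$ lies in $C_b(X)\subseteq C(X)_{fin}^B$, whence $q_B(f)=0$ and $F(f)=0$; therefore $\|F\|_{op}=\sup\{|F(f)|:\|f\|_\infty\leq1\}=0$ while $F\neq0$, so $\|\cdot\|_{op}$ is a non-separating seminorm on $(C(X),\tau_{\mathcal{B}})^*$, and in particular not a norm. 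I expect the only mildly delicate step to be the extraction of an unbounded continuous function on $X$ from the non-compactness of $B$; this is exactly where the standing assumption that $\mathcal{B}$ has a closed base (so that $B$ may be taken closed) and the Tietze extension theorem are used, while everything else is bookkeeping with the Section 2 decomposition.
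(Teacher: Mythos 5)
Your proposal is correct and follows essentially the same route as the paper: for $\mathcal{B}\not\subseteq\mathcal{K}$ one takes a functional vanishing on $C(X)_{fin}^B$ but not on $M_B$, notes $C_b(X)\subseteq C(X)_{fin}^B$ to get $\|F\|_{op}=0$, and for $\mathcal{B}\subseteq\mathcal{K}$ one uses density of $C_b(X)$ in $(C(X),\tau_{\mathcal{B}})$. The only difference is that you supply the details (closed discrete sequence plus Tietze) showing $C(X)_{fin}^B$ is a proper subspace, which the paper asserts without proof.
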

	\begin{proof} Suppose $B\in\mathcal{B}$ is not compact. Then $C(X)_{fin}^B$ is a proper open subspace of $(C(X), \tau_{\mathcal{B}})$. Let $C(X)=C(X)_{fin}^B\oplus M_B$. Consider a linear functional $F$ on $C(X)$ such that $F(f)= 0$ for all $f\in C(X)_{fin}^B$ and $F(f)\neq 0$ for some non-zero $f\in M_B$. Clearly, $F$ is a non zero continuous functional on $(C(X)_{fin}^B, \tau_{\mathcal{B}})$ as $|F(f)|\leq \rho_B(f)$ for all $f\in C(X)$. Since $C_b(X) \subseteq C(X)_{fin}^B$, $\|F\|_{op}=0$. Conversely, if $\mathcal{B}\subseteq\mathcal{K}$, then $C_b(X)$ is dense in $(C(X), \tau_{\mathcal{B}})$. Thus, $\|\cdot\|_{op}$ defines a norm on $(C(X), \tau_{\mathcal{B}})^*$.    	   
	\end{proof}
	
	The natural question one can ask here are both the topologies $\tau_{ucb}$ and $\|\cdot\|_{op}$ coincide for $\mathcal{B}=\mathcal{K}$? The next example shows that, in general, it may not always be the case.

	\begin{example}\label{example for operator and strong topology not coincide} Suppose $X=\mathbb{R}$ and $\mathcal{B}=\mathcal{K}$. For each $m\in\mathbb{N}$, consider the functions $f_m$ defined by 
		\[ f_m(x)=
		\begin{cases}
		\text{$0,$} &\quad\text{if $x<0$}\\
		\text{$x$,} &\quad\text{if $0\leq x\leq m$ }\\
		\text{$m,$} &\quad\text{if $x>m$}\\
		\end{cases}\]
		%
		%
		%
		%
		%
		Note that if $K=[a, b]$, then $\sup_{x\in K}|f_m(x)|\leq \sup_{x\in K}|f_{m_0}(x)|\leq m_0$ for $m_0>b$. Therefore, $\mathbb{A}=\{f_m:m\in\mathbb{N}\}$ is bounded in $(C(X), \tau_{\mathcal{B}})$. Thus, $\mathbb{A}^\circ$ gives a neighborhood at $0$ in $\left((C(X), \tau_{\mathcal{B}})^*, \tau_{ucb}\right)$. Let $\tau_{ucb}=\tau_{\|\cdot\|_{op}}$. Then there exists an $r>0$ such that $B_{op}[0, r]\subseteq \mathbb{A}^\circ$, where $B_{op}[0, r]=\{F\in (C(X), \tau_{\mathcal{B}})^*: \|F\|_{op}\leq r\}$. Consequently, $\mathbb{A}\subseteq (\mathbb{A}^\circ)_\circ\subseteq (B_{op}[0, r])_\circ$, where  $(\mathbb{A}^\circ)_\circ= \{f\in C(X): |F(f)|\leq 1 \text{ for all } F\in \mathbb{A}^\circ\}$. Which is not true as for $k\in\mathbb{N}$ with $kr>1$, the continuous linear functional $F(f):=\int_{k}^{k+r}f(x)dx$ belongs to $B_{op}[0, r]$, but $|F(f_k)|=\int_{k}^{k+r}f_k(x)dx=\int_{k}^{k+r}kdx=kr>1$. 	\end{example}
	
	We next find conditions under which the topology $\tau_{ucb}$ is normable on $(C(X), \tau_{\mathcal{B}}^s)^*$ (or $(C(X), \tau_{\mathcal{B}})^*$). We use the following well known results in the sequel. 
	\begin{enumerate}[(1)]
		\item A locally convex space topology is normable if and only if it has a bounded neighborhood at $0$ (see, Theorem 6.2.1, p. 160 in \cite{tvsnarici}).
		\item If $(Z, \tau)$ is an elcs and $A^\circ$ is bounded in $(Z^*, \tau_{ucb})$, then $A$ is absorbing in $Z$ (it follows from Theorem 3.2 in \cite{flctopology} and Theorem 8.3.5, p. 234 in \cite{tvsnarici}).  
	\end{enumerate} 
	
	\begin{proposition}\label{normability of strong topology} Suppose $\tau_{ucb}$ on $(C(X), \tau_{\mathcal{B}}^s)^*$ $(\text{or } (C(X), \tau_{\mathcal{B}})^*)$ is normable. Then $\mathcal{B}\subseteq \mathcal{K}$. \end{proposition}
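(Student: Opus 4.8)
The plan is to deduce the conclusion purely from the two facts quoted just before the statement: that a locally convex topology is normable only if it has a bounded neighbourhood of $0$, and that if $A^\circ$ is bounded in $(Z^*, \tau_{ucb})$ then $A$ is absorbing in $Z$. The heart of the matter is then to observe that when $\mathcal{B}\not\subseteq\mathcal{K}$ the space $(C(X),\tau_{\mathcal{B}}^s)$ (resp. $(C(X),\tau_{\mathcal{B}})$) carries a non-trivial \emph{discrete} direct summand, and that no subset of $C(X)$ can be both bounded and absorbing in such a space.

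First I would argue by contraposition. Assume $\tau_{ucb}$ on $(C(X),\tau_{\mathcal{B}}^s)^*$ is normable. Then it has a bounded neighbourhood of $0$, and since $\{A^\circ: A \text{ bounded in } (C(X),\tau_{\mathcal{B}}^s)\}$ is a base of neighbourhoods of $0$, there is a bounded set $A$ in $(C(X),\tau_{\mathcal{B}}^s)$ with $A^\circ$ bounded in the dual; by the second quoted fact, $A$ is absorbing in $C(X)$. Now suppose, for contradiction, that some member of $\mathcal{B}$ is not relatively compact; passing to its closure inside a member of the closed base $\mathcal{B}_0$, we get a closed, non-compact $B\in\mathcal{B}$. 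Since $B$ is non-compact and closed, there is a continuous function on $X$ that is unbounded on $B$ (this is already the content of the proper-subspace assertion used in Proposition~\ref{op forms a norm} for $\rho_B$; and since $B\subseteq B^\delta$ for every $\delta>0$ one has $\rho_B^s\ge\rho_B$, so the same function witnesses $\rho_B^s$ taking the value $+\infty$). Hence $C(X)_{fin}^{B_s}$ is a \emph{proper} clopen subspace of $(C(X),\tau_{\mathcal{B}}^s)$; fixing an algebraic complement $C(X)=C(X)_{fin}^{B_s}\oplus M_{B_s}$, the summand $M_{B_s}\ne\{0\}$ and, by the cited structure lemma, $M_{B_s}$ is a discrete subspace. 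The projection $q_{B_s}:C(X)\to M_{B_s}$ is linear and continuous (its fibres are translates of the clopen set $C(X)_{fin}^{B_s}$, and addition is continuous in an elcs).

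Now the two required properties of $A$ collide. On one hand, $A$ is bounded, and a continuous linear image of a bounded set is bounded, so $q_{B_s}(A)$ is bounded in the discrete space $M_{B_s}$; taking the neighbourhood $\{0\}$ in $M_{B_s}$ in the definition of boundedness shows $q_{B_s}(A)$ is finite. On the other hand, $A$ is absorbing, so for a fixed $0\ne h_0\in M_{B_s}$ there is $t_0>0$ with $t h_0\in A$ for all $t\in(0,t_0]$; since $q_{B_s}$ restricts to the identity on $M_{B_s}$, the infinite set $\{t h_0:0<t\le t_0\}$ is contained in $q_{B_s}(A)$ — a contradiction. Therefore every member of $\mathcal{B}$ is relatively compact, i.e. $\mathcal{B}\subseteq\mathcal{K}$. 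The argument for $(C(X),\tau_{\mathcal{B}})^*$ is identical, replacing $\rho_B^s$, $C(X)_{fin}^{B_s}$, $M_{B_s}$, $q_{B_s}$ by $\rho_B$, $C(X)_{fin}^B$, $M_B$, $q_B$. The only mildly delicate point is the existence of a continuous function unbounded on a closed non-compact $B$, which I would handle by the standard split into the ``$B$ not totally bounded'' case (an infinite $\epsilon$-separated subset of $B$ is closed and discrete in $X$) and the ``$B$ totally bounded, not complete'' case (a non-convergent Cauchy sequence in the closed set $B$ has no cluster point in $X$), followed by Tietze extension; everything else is routine bookkeeping with the quoted results, so I anticipate no real obstacle.
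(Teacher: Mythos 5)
Your proof is correct, and it diverges from the paper's at the decisive step. Both arguments begin the same way: normability yields a bounded set $A$ (the paper calls it $\mathbb{B}$) whose polar is a bounded neighbourhood of $0$, whence $A$ is absorbing; and a closed non-compact $B\in\mathcal{B}$ yields a proper clopen subspace $C(X)_{fin}^{B_s}$ with a non-trivial discrete complement. The paper then exploits the boundedness of $\mathbb{B}$ in the form $\mathbb{B}\subseteq\rho_B^{-1}([0,r))+\mathbb{F}$, so that $\mathbb{B}$ sits inside $C(X)_{fin}^{B}\oplus M$ with $M$ finite-dimensional, and contradicts this by constructing an explicit infinite linearly independent family $h_n$ (with $h_n(x_k)=k^{2n}$ on a closed discrete subset of $B$) whose span meets $C(X)_{fin}^{B}$ trivially --- that is, it shows $C(X)_{fin}^{B}$ has \emph{infinite} codimension. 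You instead use the discreteness of the complement directly: the continuous projection $q_{B_s}$ sends the bounded set $A$ to a bounded, hence finite, subset of the discrete summand, while absorption of a single non-zero $h_0\in M_{B_s}$ forces $q_{B_s}(A)$ to contain the infinite segment $\{th_0: 0<t\le t_0\}$. This is more economical: you only need the complement to be non-zero (codimension at least $1$), not infinite-dimensional, and you avoid the Vandermonde-type independence computation. The one external input you need beyond the paper's quoted facts --- that a closed non-compact subset of a metric space admits a continuous function on $X$ unbounded on it --- is handled correctly by your totally-bounded versus not-totally-bounded split plus Tietze, and is in any case the same fact the paper uses tacitly in Proposition \ref{op forms a norm}. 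One small point worth making explicit: the absorption property you invoke (a whole initial segment of the ray through $h_0$ lies in $A$) is the standard one from the cited reference and is what the paper itself relies on; if one adopted the weaker ``some scalar multiple of $h_0$ lies in $A$'' convention, you would first replace $A$ by its (still bounded) absolutely convex hull, exactly as the paper does when it takes $\mathbb{B}$ closed and convex.
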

	
	\begin{proof} Let $\mathbb{B}$ be a closed, convex, and bounded subset of $(C(X), \tau_{\mathcal{B}}^s)$ such that $\mathbb{B}^\circ$ gives a bounded neighborhood at $0$ in $((C(X), \tau_{\mathcal{B}}^s)^*, \tau_{ucb})$. Consequently, $\mathbb{B}$ is an absorbing subset of $C(X)$. Let $A\in \mathcal{B}$ be closed. Then $\mathbb{B}\subseteq \rho_A^{-1}([0, r))+\mathbb{F}$ for some $r>0$ and finite subset $\mathbb{F}$ of $C(X)$. Therefore, $\mathbb{B}\subseteq C(X)_{fin}^B\oplus M$ for some finite dimensional subspace $M$ of $C(X)$. If $A$ is not compact, then there exists a closed, discrete subset $D=\{x_k:k\in\mathbb{N}\}$ of $A$. For $n\in\mathbb{N}$, consider the continuous function $h_n:X\to \mathbb{R}$ such that 	\[ h_n(x_k)=
		\begin{cases}
		\text{$0,$} &\quad\text{when $k<n$}\\
		\text{$k^{2n},$} &\quad\text{when $k\geq n$}\\
		\end{cases}\] Note that if $\sum_{j=1}^{m}\alpha_j h_{n_j}=0$, then for all $1\leq j\leq m$, $\sum_{j=1}^{m}\alpha_j h_{n_j}(x_{n_j})=\sum_{i=j}^{m}\alpha_i n_j^{2n_i}=0$. Consequently, $\alpha_j=0$ for all $j$. Thus, the set $\mathbb{K}=\{h_n: n\in\mathbb{N}\}$ is linearly independent. Since $\mathbb{B}$ is absorbing, $\mathbb{K}\subseteq C(X)_{fin}^B\oplus M$. We next show that $\text{span}(\mathbb{K})\cap C(X)_{fin}^B=\{0\}$. Let $h:=\sum_{i=1}^{m}\alpha_i h_{i}$ for $\alpha_m>0$ and $m>1$. Then for $k>\max\{m^2, \frac{|\alpha_i|}{\alpha_m}: 1\leq i\leq m \}$, we have 
		\begin{eqnarray*}
			h(x_k)&=&\sum_{i=1}^{m}\alpha_i h_{i}(x_k)= \sum_{i=1}^{m} \alpha_i k^{2i}\\
			&\geq& \alpha_mk^{2m}-\sum_{i=1}^{m-1}|\alpha_i|k^{2i}\geq \alpha_mk^{2m}+k^{2(m-1)}k\alpha_m \sum_{j=1}^{m}1\\
			&\geq& \alpha_m k^{2m-1}(k-m^2). 	
		\end{eqnarray*}
		Thus, $h\notin C(X)_{fin}^B$. Hence, $\text{span}(\mathbb{K})\cap C(X)_{fin}^B=\{0\}$. Consequently, the codimension of $C(X)_{fin}^B$ in $C(X)_{fin}^B\oplus M$ is infinite  as $C(X)_{fin}^B\oplus\text{span}(\mathbb{K}) \subseteq C(X)_{fin}^B\oplus M$. Which is not possible.\end{proof}
	
	Recall that an lcs $(Z, \tau)$ is \textit{quasibarreled} if each \textit{bornivorous} (a set which is absolutely convex, closed, and absorbs all bounded subsets of $Z$) is a neighborhood of $0$ (see, \cite{ hanslocally, tvsnarici}). 
	
	\begin{proposition}\label{quasibarreled} Suppose $(Z, \tau)$ is a quasibarreled lcs. Then the strong dual $(Z^*, \tau_s)$ is normable if and only if $(Z, \tau)$ is normable.
	\end{proposition}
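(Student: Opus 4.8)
The plan is to argue both implications using the standard fact (1) from the preamble: a locally convex topology is normable precisely when it admits a bounded neighborhood of $0$. Since $(Z,\tau)$ is quasibarreled and $\tau_s=\tau_{ucb}$ is the strong topology on $Z^*$, I would exploit the symmetric interplay between bounded sets and neighborhoods on the two sides of the duality.

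First I would prove the easy direction: if $(Z,\tau)$ is normable, say with unit ball $U$, then the polar $U^\circ$ is a neighborhood of $0$ in $(Z^*,\tau_s)$, and it is $\tau_s$-bounded because for any bounded $A\subseteq Z$ we have $A\subseteq \lambda U$ for some $\lambda>0$, whence $\sup_{F\in U^\circ}\rho_A(F)\le \lambda$; so $U^\circ$ absorbs and is absorbed correctly, giving a bounded neighborhood of $0$ in $(Z^*,\tau_s)$, and by fact (1) $\tau_s$ is normable. (This direction does not even need quasibarreledness.)

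For the converse, suppose $(Z^*,\tau_s)$ is normable. Then it has a bounded neighborhood of $0$, which I may take of the form $A^\circ$ for some bounded $A\subseteq Z$ (the polars of bounded sets form a neighborhood base at $0$ in $\tau_{ucb}$). The key step is to show that the closed absolutely convex hull $B:=\overline{\Gamma(A)}$ is bornivorous in $(Z,\tau)$: it is closed, absolutely convex by construction, and I must check it absorbs every bounded $S\subseteq Z$. Here is where $A^\circ$ being $\tau_s$-bounded enters — for the bounded set $S$, boundedness of $A^\circ$ in $\tau_s$ means $\sup_{F\in A^\circ}\rho_S(F)=:c<\infty$, i.e. $|F(s)|\le c$ for all $s\in S$, $F\in A^\circ$; this says $S\subseteq c\,(A^\circ)^\circ = c\,B$ by the bipolar theorem (in the lcs $(Z,\tau)$, applied to the absolutely convex set $\Gamma(A)$). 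Thus $B$ absorbs $S$, so $B$ is bornivorous. Since $(Z,\tau)$ is quasibarreled, $B$ is a neighborhood of $0$; as $B=c^{-1}$-absorbs the bounded set $A$ and conversely $B\subseteq A^\circ{}^\circ$ is itself $\tau$-bounded (because $B^\circ=A^\circ$ up to closure is a neighborhood of $0$ in $\tau_s$ and $(Z,\tau)$ quasibarreled forces... ), actually more directly: $B$ is a neighborhood of $0$ that is bounded, since its polar $B^\circ=(\overline{\Gamma(A)})^\circ=A^\circ$ (polars are insensitive to closed absolutely convex hulls) is a neighborhood of $0$ in the strong dual, and a neighborhood of $0$ in $(Z,\tau)$ whose polar is a strong neighborhood of $0$ is $\tau$-bounded. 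Hence $(Z,\tau)$ has a bounded neighborhood of $0$, so by fact (1) it is normable.

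The main obstacle I anticipate is the converse direction, specifically verifying cleanly that $B=\overline{\Gamma(A)}$ is bornivorous and that its being a neighborhood of $0$ forces $\tau$ to be normable; the bipolar computation $S\subseteq c\,B$ from strong-boundedness of $A^\circ$ is the crux, and one must be careful that the ambient duality $\langle Z,Z^*\rangle$ is separating so that the bipolar theorem applies — which is automatic here since $(Z,\tau)$ is an lcs and hence Hausdorff may need to be assumed or the bipolar taken in the Hausdorff quotient; I would note this point explicitly. Everything else is a routine polarity bookkeeping once quasibarreledness is invoked to upgrade the bornivorous set $B$ to a genuine neighborhood of $0$.
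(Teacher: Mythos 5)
Your proof is correct, but it takes a genuinely different route from the paper's. The paper argues through the bidual: if $(Z^*,\tau_s)$ is normable, then so is its strong dual $Z^{**}$, and quasibarreledness is invoked precisely to say that the canonical evaluation map $J\colon Z\to Z^{**}$ is a topological isomorphism onto its image, so $Z$ inherits normability as a subspace of a normable space. You instead work entirely on the $Z$ side with the bounded-neighborhood criterion for normability: you take a bounded basic neighborhood $A^\circ$ of $0$ in $(Z^*,\tau_s)$, show via the bipolar theorem that $B=\overline{\operatorname{aco}}(A)$ is bornivorous (this is exactly where strong boundedness of $A^\circ$ enters, through $\sup_{F\in A^\circ}\rho_S(F)<\infty$ for each bounded $S$), use quasibarreledness to upgrade $B$ to a neighborhood of $0$, and observe that $B$ is bounded. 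Your approach is more self-contained --- it unpacks the polarity computation that underlies the embedding theorem the paper cites --- at the cost of length; the paper's is a three-line deduction from a standard structural fact about quasibarreled spaces, and your easy direction (normable $\Rightarrow$ strong dual normable, no quasibarreledness needed) is left implicit there. Two small points on your write-up: the boundedness of $B$ is most cleanly seen directly, since $A\subseteq\lambda U$ for a closed absolutely convex neighborhood $U$ forces $\overline{\operatorname{aco}}(A)\subseteq\lambda U$; this avoids the hesitant detour through ``a neighborhood whose polar is a strong neighborhood is bounded'' (which is also true, but redundant here). And the Hausdorff caveat you raise is harmless, as the paper's discussion of strong duals is explicitly in the Hausdorff lcs setting.
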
	
	\begin{proof} Suppose $(Z^*, \tau_s)$ is normable. Then its the strong dual $Z^{**}$ is normable. Since $(Z, \tau)$ is quasibarreled, $Z$ is isomorphic to its canonical image $J(Z)\subseteq Z^{**}$. Hence, $(Z, \tau)$ is normable.        \end{proof}
	
	Suppose $(Z, \tau)$ is an elcs. We denote $\tau_F$ by the finest locally convex topology (flc topology) for $(Z, \tau)$ which is still coarser than $\tau$. The space $(Z, \tau_F)$ is called the finest space for $(Z, \tau)$ (see, \cite{spoens, flctopology}).      
	
	\begin{theorem}\label{normability thm} Suppose $\mathcal{B}$ is a bornology on $X$ such that the finest space $(C(X), \tau_F)$ for $(C(X), \tau_{\mathcal{B}})$ is quasibarreled. Then the following statements are equivalent.
		\begin{enumerate}[$(1)$]
			\item 	The topology $\tau_{ucb}$ on the dual of $(C(X), \tau_{\mathcal{B}})$ is normable;
			\item $\tau_{ucb}$ on the dual of $(C(X), \tau_{\mathcal{B}}^s)$ is normable;
			\item $\tau_{\mathcal{B}}$ is normable on $C(X)$;
			\item $\tau_{\mathcal{B}}^s$ is normable on $C(X)$;
			\item $X$ is compact and $X\in \mathcal{B}$.  
		\end{enumerate}	
	\end{theorem}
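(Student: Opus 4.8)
The plan is to reduce the whole equivalence to the locally convex world, where Proposition \ref{normability of strong topology} and Proposition \ref{quasibarreled} are available, and then to squeeze the compactness conclusion out of the existence of a bounded neighbourhood of the origin. The first thing I would record is that normability of \emph{any} of the four topologies already forces $\mathcal{B}\subseteq\mathcal{K}$. For (1) and (2) this is exactly Proposition \ref{normability of strong topology}. If instead $\tau_{\mathcal{B}}$ or $\tau_{\mathcal{B}}^s$ is normable, then $(C(X),\tau_{\mathcal{B}})$ is a normed space, and on a normed space every continuous extended seminorm is finite-valued (continuity at $0$ bounds it on a ball, hence everywhere by homogeneity); since $\rho_B\le\rho_B^s$ for every $B$, each $\rho_B$ with $B\in\mathcal{B}_0$ is finite, i.e.\ every continuous function on $X$ is bounded on the closed set $B$. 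If such a $B$ were non-compact it would contain a closed discrete sequence $\{x_k\}$ (as in the proof of Proposition \ref{normability of strong topology}), and a Tietze extension of $x_k\mapsto k$ would contradict finiteness of $\rho_B$; hence $\mathcal{B}_0\subseteq\mathcal{K}$, so $\mathcal{B}\subseteq\mathcal{K}$.

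Next I would work under the assumption $\mathcal{B}\subseteq\mathcal{K}$. There every $B\in\mathcal{B}$ has compact closure $\overline B\in\mathcal{B}$, and $\overline B$ is a closed shield for $B$: a closed set disjoint from the compact set $\overline B$ lies at positive distance from it and therefore misses $B^\delta$ for small $\delta$. Thus $\mathcal{B}$ is shielded from closed sets, and Theorem \ref{condition under which both the spaces have same dual} yields $\tau_{\mathcal{B}}=\tau_{\mathcal{B}}^s$ on $C(X)$. In particular $(C(X),\tau_{\mathcal{B}})$ is a genuine locally convex space, so it coincides with its finest space $(C(X),\tau_F)$, which by hypothesis is quasibarreled; moreover $\tau_{ucb}$ on its dual is exactly the strong topology. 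Applying Proposition \ref{quasibarreled} to $Z=(C(X),\tau_{\mathcal{B}})$ then gives that $\tau_{ucb}$ is normable iff $\tau_{\mathcal{B}}$ is normable, and since $\tau_{\mathcal{B}}=\tau_{\mathcal{B}}^s$ the statements (1), (2), (3), (4) become mutually equivalent. Together with the previous paragraph, this shows that if any one of (1)--(4) holds then all four do; it remains to prove (3)$\Rightarrow$(5) and (5)$\Rightarrow$(3),(4).

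For (3)$\Rightarrow$(5), assume $\mathcal{B}\subseteq\mathcal{K}$ and $\tau_{\mathcal{B}}$ normable. A normable lcs has a bounded neighbourhood of $0$, which we may shrink to a basic one $U=\rho_B^{-1}([0,r))$ with $B\in\mathcal{B}_0$ compact and $r>0$; boundedness of $U$ forces $\sup_{f\in U}\rho_A(f)<\infty$ for every $A\in\mathcal{B}_0$. If some $A\in\mathcal{B}_0$ contained a point $x_0\notin B$, then for each $n$ a Urysohn function $f_n$ with $f_n|_B=0$ and $f_n(x_0)=n$ would lie in $U$ yet satisfy $\rho_A(f_n)\ge n$, a contradiction as $n\to\infty$. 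Hence $A\subseteq B$ for every $A\in\mathcal{B}_0$, and since $\mathcal{B}_0$ is a base for a bornology covering $X$ this forces $X=B$, so $X$ is compact and $X\in\mathcal{B}$. Conversely, if $X$ is compact and $X\in\mathcal{B}$, then $\mathcal{B}=\mathcal{P}_0(X)$ and $X^\delta=X$, so $\tau_{\mathcal{B}}=\tau_{\mathcal{B}}^s$ is the topology of $\|\cdot\|_\infty$, a genuine norm on $C(X)$; thus (3) and (4) hold, $(C(X),\tau_{\mathcal{B}})$ is a Banach space, and $\tau_{ucb}$ on its dual is the operator-norm topology, giving (1) and (2) as well.

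I expect the main obstacle to be the step ``$\tau_{\mathcal{B}}$ normable $\Rightarrow$ $X$ compact and $X\in\mathcal{B}$'': one must translate a topological-linear hypothesis into the combinatorial assertion that $\mathcal{B}$ possesses a largest member which exhausts $X$, and the Urysohn-function argument — which genuinely relies on the set $B$ supplied by the bounded neighbourhood being closed and compact, so that points outside it can be separated from it — is the crux. A secondary subtlety is bookkeeping: the quasibarreled hypothesis must be invoked only in the implications starting from (1) or (2), where Proposition \ref{quasibarreled} is actually needed to pass from a normable dual back to a normable predual.
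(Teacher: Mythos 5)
Your proof is correct and follows essentially the same route as the paper: Proposition \ref{normability of strong topology} to force $\mathcal{B}\subseteq\mathcal{K}$, shieldedness (Theorem \ref{condition under which both the spaces have same dual}) to identify $\tau_{\mathcal{B}}$ with $\tau_{\mathcal{B}}^s$ and hence their duals, Proposition \ref{quasibarreled} for the passage from a normable dual back to a normable predual, and the elementary ``$\rho_B$ is a norm iff $B=X$ is compact'' characterization for $(3)\Leftrightarrow(5)$. The only difference is that you spell out in full the bounded-neighbourhood and Urysohn/Tietze details that the paper dismisses as easy.
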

	\begin{proof} The implications $(1)\Leftrightarrow(2)$ follow from Theorem 4.1 of \cite{Suc} and Proposition \ref{normability of strong topology}. The implications $(3)\Leftrightarrow (4)\Leftrightarrow(5)$ are easy to prove as for $B\in \mathcal{B}$, $\rho_{B}$ (or $\rho_{B}^s$) gives a norm on $C(X)$ if and only if $B=X$ and $B$ is compact.  
		
		$(1)\Rightarrow(3).$ By Proposition \ref{normability of strong topology}, $\mathcal{B}\subseteq \mathcal{K}$. Therefore, $\tau_{\mathcal{B}}=\tau_{F}$. Hence, by Proposition \ref{quasibarreled}, $\tau_{\mathcal{B}}$ is normable. 
		
		$(5)\Rightarrow(1)$. If $B=X$ is compact, then $\tau_\mathcal{B}$ is induced by the supremum norm. Consequently, its strong dual is normable.   	   
	\end{proof}
	
	\begin{corollary} For every metric space $X$, the strong dual of $(C(X), \tau_k)$ $((C(X), \tau_{p}))$ is normable if and only if $X$ is compact (finite).
	\end{corollary}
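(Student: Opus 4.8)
The plan is to deduce both equivalences from Theorem \ref{normability thm}, taking $\mathcal{B}=\mathcal{K}$ in the $\tau_k$ case and $\mathcal{B}=\mathcal{F}$ in the $\tau_p$ case. The first observation I would make is that $\mathcal{K}\subseteq\mathcal{K}$ and $\mathcal{F}\subseteq\mathcal{K}$, so that for either bornology every $\rho_B$ is a finite-valued seminorm on $C(X)$ (a continuous function is bounded on a relatively compact set); hence $\tau_k$ and $\tau_p$ are already locally convex topologies, and each therefore coincides with its own finest locally convex topology $\tau_F$. Thus the hypothesis of Theorem \ref{normability thm} reduces, in these two cases, to quasibarreledness of $(C(X),\tau_k)$, respectively of $(C(X),\tau_p)$. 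Once that is in place, the corollary will follow by rewriting clause $(5)$ of the theorem: for $\mathcal{B}=\mathcal{K}$ the condition $X\in\mathcal{K}$ just says that $\overline X=X$ is compact, so $(5)$ becomes ``$X$ is compact''; for $\mathcal{B}=\mathcal{F}$ the condition $X\in\mathcal{F}$ says that $X$ is finite (which already forces compactness), so $(5)$ becomes ``$X$ is finite''.

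It then remains to check the two quasibarreledness statements. For $\tau_k$ I would appeal to the Nachbin--Shirota theorem together with the fact that every metric space is a $\mu$-space: if $F\subseteq X$ is closed and non-compact, it contains a sequence with no convergent subsequence, whose underlying set $D$ is closed and discrete in $X$, and a Tietze extension of an unbounded function on $D$ shows that $F$ is not functionally bounded; hence $(C(X),\tau_k)$ is barreled, in particular quasibarreled. For $\tau_p$ I would instead verify directly that every $\tau_{ucb}$-bounded (equivalently, strongly bounded) subset $H$ of the dual $E=\operatorname{span}\{\delta_x:x\in X\}$ of finitely supported functionals is equicontinuous, which is exactly quasibarreledness. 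If the set $\bigcup_{u\in H}\operatorname{supp}(u)$ were infinite, I would choose $u_n\in H$ and distinct points $x_n\in\operatorname{supp}(u_n)\setminus\bigcup_{j<n}\operatorname{supp}(u_j)$, and, after passing to a subsequence, treat two cases: $\{x_n\}$ is closed and discrete in $X$, or $x_n\to x_\infty$ with $x_\infty\notin\{x_n\}$. In the closed-discrete case a single Tietze extension $g$, with values at the $x_n$ chosen recursively so that $|u_n(g)|=n$, already contradicts $\tau_{ucb}$-boundedness of $H$. In the convergent case I would use Urysohn bumps $f_m$ supported in balls $B(x_m,r_m)$ with $r_m$ small enough to miss $x_1,\dots,x_{m-1}$ and $x_\infty$; then $u_m(f_m)$ reduces to a single term and may be made equal to $m$, while each point of $X$ lies in only finitely many of these balls, so $\{f_m\}$ is bounded in $(C(X),\tau_p)$ --- again contradicting $\tau_{ucb}$-boundedness of $H$. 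Hence $H$ is supported on a fixed finite set of points, so it lies in a finite-dimensional subspace of $E$, and being bounded there it is equicontinuous.

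With both hypotheses verified, Theorem \ref{normability thm} applies to $\mathcal{B}=\mathcal{K}$ and to $\mathcal{B}=\mathcal{F}$, so in each case $\tau_{ucb}$ on the relevant dual is normable if and only if $(5)$ holds, and the reformulations of $(5)$ given above complete the argument. I expect the only genuinely non-routine point to be the quasibarreledness of $(C(X),\tau_p)$ for an arbitrary metric space: the closed-discrete case is immediate from Tietze's theorem, but in the convergent-sequence case no single continuous function can be unbounded along the $x_n$, which forces the slightly delicate passage to a bounded \emph{family} of bump functions; once that construction is set up correctly, everything else is bookkeeping with standard facts (Nachbin--Shirota, and the reduction $\tau_{\mathcal{B}}=\tau_F$ for $\mathcal{B}\subseteq\mathcal{K}$).
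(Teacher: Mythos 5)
Your overall route is exactly the paper's: reduce everything to Theorem \ref{normability thm} by noting that for $\mathcal{B}\subseteq\mathcal{K}$ each $\rho_B$ is a finite-valued seminorm, so $\tau_{\mathcal{B}}=\tau_F$ and the hypothesis of that theorem becomes quasibarreledness of $(C(X),\tau_k)$, resp.\ $(C(X),\tau_p)$, and then translate clause $(5)$ into ``$X$ compact'', resp.\ ``$X$ finite''. The only difference is that the paper disposes of the quasibarreledness hypothesis by citation (Corollary 3 and Theorem 5, p.~234 of \cite{hanslocally}), whereas you prove it. Your $\tau_k$ argument (Nachbin--Shirota plus the observation that every metric space is a $\mu$-space, via Tietze on a closed discrete subset of a non-compact closed set) is correct and in fact yields barreledness. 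Your $\tau_p$ argument is the right idea --- show that a strongly bounded $H\subseteq\operatorname{span}\{\delta_x\}$ has finite total support, hence is equicontinuous --- and the convergent-sequence case is handled correctly.

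The one step that does not work as written is the closed-discrete case. A single Tietze extension $g$ is determined by prescribing its values on the closed discrete set $D=\{x_n\}$, but $u_n(g)=\sum_{y\in\operatorname{supp}(u_n)}c_{n,y}\,g(y)$ involves values of $g$ at points of $\operatorname{supp}(u_n)$ that need not lie in $D$ (you only arranged $x_n\notin\operatorname{supp}(u_j)$ for $j<n$, not $\operatorname{supp}(u_n)\subseteq\{x_j\}$). Those values are known only after the extension is chosen, which happens only after all values on $D$ are fixed, so the recursion ``choose $g(x_n)$ so that $|u_n(g)|=n$'' is circular; moreover $\bigcup_n\operatorname{supp}(u_n)$ need not be closed or discrete, so you cannot simply prescribe $g$ on all supports at once. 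The repair is to use in this case the same bump construction you already set up for the convergent case: pick $r_n\to 0$ with $B(x_n,r_n)$ disjoint from $\operatorname{supp}(u_n)\setminus\{x_n\}$ and from the other $x_j$; since $r_n\to 0$ and $\{x_n\}$ has no accumulation point, each point of $X$ lies in at most finitely many of these balls, so the family of bumps $f_n$ with $u_n(f_n)=n$ is pointwise bounded, contradicting strong boundedness of $H$. With that substitution both subcases are handled uniformly and your proof is complete.
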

	\begin{proof}  For every metric space $X$, $(C(X), \tau_k)$ and $(C(X), \tau_{p})$ are always quasibarreled (see, Corollary 3 and Theorem 5, p. 234 in \cite{hanslocally}). It now follows from Theorem \ref{normability thm}.
	\end{proof}
	
	%
	
	%
	%
	%
	%
	
	We have noted in Proposition \ref{op forms a norm} that the operator norm may not define a norm on $(C(X), \tau_{\mathcal{B}})^*$ when $\mathcal{B}\supsetneq \mathcal{K}$. Therefore, we introduce a topology $\sigma$ on $\mathscr{M}_\mathcal{B}(X)$ that establishes a strong relationship with $((C(X), \tau_{\mathcal{B}})^*, \tau_{ucb})$.
	

	In the proof of Theorem \ref{decomposition of continuous linear functionals}, we have seen  that if $\mu\in\mathscr{M}_{\mathcal{B}}(X)$, then $$\|\mu\|=\sup\left\lbrace \int_X fd\mu: \|f\|_\infty\leq 1\right\rbrace.$$ If we replace $\left\lbrace f: \|f\|_\infty\leq 1\right\rbrace$ by any other bounded set $\mathbb{B}$ of $(C(X), \tau_{\mathcal{B}})$, then the map $\omega_\mathbb{B}(\mu)=\sup_{f\in \mathbb{B}}\left|\int_Xfd\mu\right|$ for $\mu\in\mathscr{M}_{\mathcal{B}}$ may attain the infinite value. However, if $D\in\mathcal{B}$ such that $C(X)_{fin}^D\subseteq C(X)_{fin}^B$ for all $B\in\mathcal{B}$, then for every bounded subset $\mathbb{B}$ of $C(X)_{fin}^D$, $\omega_\mathbb{B}$ defines a seminorm on $\mathscr{M}_{\mathcal{B}}$. These spaces are referred to in the literature as fundamental extended locally convex spaces. 
	
	\begin{definition}\normalfont (\cite{flctopology, esaetvs}) An elcs $(Z, \tau)$ is said to be a \textit{fundamental elcs} if $Z_{fin}$ is open in $(Z, \tau)$, where $$Z_{fin}=\bigcap\left\lbrace Z_{fin}^\rho: \rho \text{ is a continuous extended seminorm on } (Z, \tau)\right\rbrace.$$
	\end{definition}
	
	\begin{proposition} The following statements are equivalent for a bornology $\mathcal{B}$ on $X$.
		\begin{enumerate}[(1)]
			\item $(C(X), \tau_{\mathcal{B}})$ forms a fundamental elcs.
			\item There exists a $D\in\mathcal{B}$ such that $C(X)_{fin}^D\subseteq C(X)_{fin}^B$ for all $B\in\mathcal{B}$.
		\end{enumerate} 
	\end{proposition}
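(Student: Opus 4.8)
The plan is to compute $C(X)_{fin}$ in terms of the concrete generating family $\{\rho_B:B\in\mathcal{B}\}$ rather than the abstract family of all continuous extended seminorms, and then argue entirely with the subspace structure. The first step I would carry out is the standard domination fact: if $\rho$ is any continuous extended seminorm on $(C(X),\tau_{\mathcal{B}})$, then $\{f:\rho(f)<1\}$ is a neighborhood of $0$, hence contains a basic set $\rho_B^{-1}([0,r))$ for some $B\in\mathcal{B}$ and $r>0$; a routine scaling argument (using $\rho_B(\lambda f)=\lambda\rho_B(f)$ whenever $\rho_B(f)<\infty$) upgrades this to $\rho\le\frac{2}{r}\rho_B$ on $\{f:\rho_B(f)<\infty\}$, and trivially elsewhere. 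Thus $C(X)_{fin}^{B}=C(X)_{fin}^{\rho_B}\subseteq C(X)_{fin}^{\rho}$, and since each $\rho_B$ is itself a continuous extended seminorm this gives
$$C(X)_{fin}=\bigcap\{C(X)_{fin}^{\rho}:\rho\text{ a continuous extended seminorm}\}=\bigcap_{B\in\mathcal{B}}C(X)_{fin}^{B}.$$
I would also record that $C(X)_{fin}$ is a linear subspace of $C(X)$, being an intersection of the subspaces $C(X)_{fin}^{B}$.

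For the implication $(2)\Rightarrow(1)$ I would simply note that if $D\in\mathcal{B}$ satisfies $C(X)_{fin}^{D}\subseteq C(X)_{fin}^{B}$ for every $B\in\mathcal{B}$, then the displayed identity collapses to $C(X)_{fin}=C(X)_{fin}^{D}$. By the lemma quoted earlier in the excerpt ($Z_{fin}^{\rho}$ is a clopen subspace for every continuous extended seminorm $\rho$), the set $C(X)_{fin}^{D}$ is clopen, so $C(X)_{fin}$ is open and $(C(X),\tau_{\mathcal{B}})$ is a fundamental elcs.

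For $(1)\Rightarrow(2)$ I would assume $C(X)_{fin}$ is open; then it contains a basic neighborhood $\rho_D^{-1}([0,r))$ for some $D\in\mathcal{B}$, $r>0$. The key observation is that $\rho_D^{-1}([0,r))$ is absorbing inside $C(X)_{fin}^{D}$, where $\rho_D$ is an honest (finite-valued) seminorm: given $f\in C(X)_{fin}^{D}$ one has $\rho_D(\lambda f)=\lambda\rho_D(f)<r$ for small $\lambda>0$, so $\lambda f\in\rho_D^{-1}([0,r))\subseteq C(X)_{fin}$, and since $C(X)_{fin}$ is a subspace, $f\in C(X)_{fin}$. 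Hence $C(X)_{fin}^{D}\subseteq C(X)_{fin}$, and combined with the trivial reverse inclusion this yields $C(X)_{fin}=C(X)_{fin}^{D}$; feeding this back into the displayed identity gives $C(X)_{fin}^{D}=C(X)_{fin}\subseteq C(X)_{fin}^{B}$ for every $B\in\mathcal{B}$, which is exactly $(2)$.

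The only point requiring any care is the first step — replacing the family of all continuous extended seminorms by the generating family $\{\rho_B\}$ when computing $C(X)_{fin}$ — and this is just the familiar domination argument for a family of (extended) seminorms defining a topology; everything afterwards is elementary manipulation with linear subspaces together with the already-established clopenness of each $C(X)_{fin}^{B}$.
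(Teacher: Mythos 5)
Your proof is correct and follows essentially the same route as the paper: both directions hinge on extracting a basic neighborhood $\rho_D^{-1}([0,r))$ from the open set $C(X)_{fin}$ and on the domination of an arbitrary continuous extended seminorm by some $\rho_B$, which together yield $C(X)_{fin}=\bigcap_{B\in\mathcal{B}}C(X)_{fin}^{B}$. The only difference is that you spell out the scaling/absorption step that the paper dismisses as ``easy to see.''
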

	\begin{proof} Suppose $D\in\mathcal{B}$ and $r>0$ such that $\rho_{D}^{-1}([0,r))\subseteq \bigcap_{B\in\mathcal{B}} C(X)_{fin}^B.$ It is now easy to see that $C(X)_{fin}^D\subseteq C(X)_{fin}^B$ for all $B\in\mathcal{B}$. Conversely, if $\rho$ is any continuous extended seminorm on $(C(X), \tau_{\mathcal{B}})$, then $\rho\leq \rho_B$ for some $B\in\mathcal{B}$. Consequently, $\bigcap_{B\in\mathcal{B}} C(X)_{fin}^B\subseteq C(X)_{fin}^\rho$ for all continuous extended seminorm  $\rho$ on $(C(X), \tau_\mathcal{B}).$ Thus, $(C(X), \tau_{\mathcal{B}})$ forms a fundamental elcs. 
	\end{proof}

	Now, suppose $\mathcal{B}$ is a bornology such that $(C(X), \tau_{\mathcal{B}})$ forms a fundamental elcs. Then there exists a $D\in\mathcal{B}$, such that $C(X)_{fin}^D\subseteq C(X)_{fin}^B$ for all $B\in\mathcal{B}$. Let $\sigma$ be the topology on $\mathscr{M}_{\mathcal{B}}(X)$ induced by the directed family $\{\omega_\mathbb{B}:\mathbb{B} \text{ is a bounded subset of } C(X)_{fin}^D\}$ of seminorms. Then $(\mathscr{M}_\mathcal{B}(X), \sigma)$ forms a locally convex space.

	\begin{theorem}\label{tauucb on the dual of C(X)} Suppose $\mathcal{B}$ is a bornology on $X$ such that $(C(X), \tau_{\mathcal{B}})$ forms a fundamental elcs. Then there exists a $D\in\mathcal{B}$ such that $((C(X), \tau_{\mathcal{B}})^*, \tau_{ucb})$  is isomorphic to product space $(\mathscr{M}_\mathcal{B}(X), \sigma)\times (M^*, \tau_{w^*})$, where $C(X)=C(X)_{fin}^D\oplus M$  and $\tau_{w^*}$ is the weak$^*$ topology on the dual $M^*$ of $M$.  \end{theorem}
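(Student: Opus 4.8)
The plan is to build an explicit linear isomorphism
\[
\Phi\colon (C(X),\tau_{\mathcal{B}})^*\longrightarrow \mathscr{M}_{\mathcal{B}}(X)\times M^*,
\qquad \Phi(F)=(\mu_F,\,F|_M),
\]
and then to verify that $\Phi$ transports $\tau_{ucb}$ onto $\sigma\times\tau_{w^*}$. First I would invoke the fundamentality hypothesis together with the preceding proposition to fix a closed $D\in\mathcal{B}$ with $C(X)_{fin}^D\subseteq C(X)_{fin}^B$ for every $B\in\mathcal{B}$, and an algebraic complement $M$ of $C(X)_{fin}^D$ in $C(X)$; since $C(X)_{fin}^D$ is clopen this is a topological direct sum, so the projections $p_D,q_D$ are continuous and $M$ is discrete. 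Given $F\in(C(X),\tau_{\mathcal{B}})^*$, Lemma \ref{support of a continuous linear functional} and Theorem \ref{decomposition of continuous linear functionals} yield a closed $B\in\mathcal{B}$ with $|F|\le\rho_B$ and a $\mu_F\in\mathscr{M}_{\mathcal{B}}(X)$ supported on $B$ with $F(f)=\int_X f\,d\mu_F$ for all $f\in C(X)_{fin}^B$. Because $C_b(X)\subseteq C(X)_{fin}^D\subseteq C(X)_{fin}^B$, the Riesz uniqueness theorem shows $\mu_F$ depends only on $F$ (not on the auxiliary $B$ or on the complement used in Theorem \ref{decomposition of continuous linear functionals}); moreover $\int_X f\,d\mu_F$ is finite and equals $F(f)$ for every $f\in C(X)_{fin}^D$.

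Linearity of $\Phi$ is immediate, and injectivity holds because $\Phi(F)=0$ forces $F$ to vanish on both $C(X)_{fin}^D$ and $M$, hence on $C(X)$. For surjectivity, take $(\mu,G)\in\mathscr{M}_{\mathcal{B}}(X)\times M^*$ with $\mu$ supported on a closed $B\in\mathcal{B}$; Proposition \ref{every measure gives a continuous map} makes $H_\mu$ continuous on $(C(X)_{fin}^B,\tau_{\mathcal{B}})$, hence on the subspace $C(X)_{fin}^D$, while $G$ is automatically continuous on the discrete space $M$. Thus $F:=H_\mu\circ p_D+G\circ q_D\in(C(X),\tau_{\mathcal{B}})^*$, and restricting to the two summands gives $\mu_F=\mu$ (by uniqueness, since $F=H_\mu$ on $C(X)_{fin}^D\supseteq C_b(X)$) and $F|_M=G$, i.e.\ $\Phi(F)=(\mu,G)$.

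The core of the argument is the topological identification, which rests on a description of the bounded subsets of the fundamental elcs $(C(X),\tau_{\mathcal{B}})$. If $\mathbb{A}$ is bounded, testing boundedness against the $0$-neighborhood $\rho_D^{-1}([0,1))\subseteq C(X)_{fin}^D=\ker q_D$ (on which $q_D$ vanishes) shows $q_D(\mathbb{A})$ is finite, while $p_D(\mathbb{A})$ is bounded in $C(X)_{fin}^D$ since $p_D$ is continuous and linear; conversely, any set $\mathbb{A}_1\cup\{m_1,\dots,m_n\}$ with $\mathbb{A}_1$ bounded in $C(X)_{fin}^D$ and $m_j\in M$ is bounded in $C(X)$. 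Writing $F(f)=H_{\mu_F}(p_Df)+(F|_M)(q_Df)$ and using $\mathbb{A}\subseteq p_D(\mathbb{A})+q_D(\mathbb{A})$, one gets on one side
\[
\sup_{f\in\mathbb{A}}|F(f)|\ \le\ \omega_{p_D(\mathbb{A})}(\mu_F)+\max_{m\in q_D(\mathbb{A})}|(F|_M)(m)|,
\]
so every $\tau_{ucb}$-seminorm is dominated by a $(\sigma\times\tau_{w^*})$-seminorm, giving continuity of $\Phi^{-1}$; on the other side, for $\mathbb{A}=\mathbb{A}_1\cup\{m_1,\dots,m_n\}$ a direct computation gives the exact equality $\sup_{f\in\mathbb{A}}|F(f)|=\max\{\omega_{\mathbb{A}_1}(\mu_F),\ \max_j|(F|_M)(m_j)|\}$, and as $\mathbb{A}_1$ and $\{m_j\}$ range over all such data these generate $\sigma\times\tau_{w^*}$, giving continuity of $\Phi$. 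Hence $\Phi$ is an isomorphism of locally convex spaces.

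I expect the main obstacle to be the bounded-set lemma: making precise that every bounded subset of $(C(X),\tau_{\mathcal{B}})$ splits as a set bounded in $C(X)_{fin}^D$ together with a finite subset of the discrete complement $M$, that $p_D(\mathbb{A})$ is genuinely bounded \emph{inside the subspace} $C(X)_{fin}^D$, and that $\sup_{f\in\mathbb{A}}|F(f)|$ decomposes cleanly through $p_D$ and $q_D$. A secondary point needing care is the well-definedness of $\mu_F$, independent of the auxiliary choices in Theorem \ref{decomposition of continuous linear functionals}, which I would handle by characterizing $\mu_F$ as the unique element of $\mathscr{M}(X)$ agreeing with $F$ on $C_b(X)$.
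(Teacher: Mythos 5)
Your proposal is correct and follows essentially the same route as the paper: it sets up the same isomorphism (merely in the inverse direction, $F\mapsto(\mu_F,F|_M)$ rather than $(\mu,H)\mapsto H_\mu\circ p_D+H\circ q_D$), and establishes bicontinuity through the same key observation that a bounded set $\mathbb{A}$ splits via the projections into a bounded subset $p_D(\mathbb{A})$ of $C(X)_{fin}^D$ and a finite subset $q_D(\mathbb{A})$ of the discrete complement $M$. Your extra care over the well-definedness of $\mu_F$ (via Riesz uniqueness on $C_b(X)$) and your exact $\max$-identity for $\sup_{f\in\mathbb{A}_1\cup\{m_j\}}|F(f)|$ are welcome refinements of details the paper leaves implicit, but they do not change the argument.
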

	
	\begin{proof} Let $D\in\mathcal{B}$ such that $C(X)_{fin}^D\subseteq C(X)_{fin}^B$ for all $B\in\mathcal{B}$. Suppose $C(X)=C(X)_{fin}^D\oplus M$. Consider the map $$\Psi: (\mathscr{M}_\mathcal{B}(X), \sigma)\times (M^*, \tau_{w^*})\to ((C(X), \tau_{\mathcal{B}})^*, \tau_{ucb})$$
		defined by $$(\mu, H)\mapsto \int_{X} p_D(f)d\mu + H(q_D(f)) \text{ for } f\in C(X),$$ where $p_D$ and $q_D$ are the projections of $C(X)$ onto $C(X)_{fin}^D$ and $M$, respectively. Clearly, $\Psi$ is linear. If $\int_{X} p_D(f)d\mu + H(q_D(f))=0$ for all $f\in C(X)$, then $H=0$ and $\|\mu\|=0$ as $\|\mu\|=\sup\left\lbrace \left| \int_{X} fd\mu\right| : \|f\|_{\infty}\leq 1\right\rbrace$. Consequently, $\mu=0$ and $H=0$. Thus, $\Psi$ is injective. Since $C(X)_{fin}^D\oplus M$ is a topological direct sum and  $C(X)_{fin}^D\subseteq C(X)_{fin}^B$ for all $B\in\mathcal{B}$, by Theorem \ref{decomposition of continuous linear functionals}, $\Psi$ is surjective. 
		
		For the continuity of $\Psi$, let $\mathbb{B}$ be a bounded set in $(C(X), \tau_{\mathcal{B}})$. Clearly, $p_D(\mathbb{B})$ is a bounded subset of $C(X)_{fin}^D$.  Suppose $r>0$ and $\mathbb{F}$ is a finite subset of $M$ such that $\mathbb{B}\subseteq \rho_{D}^{-1}([0, r))+ \mathbb{F}$. Consequently, $q_D(\mathbb{B})$ is a finite subset of $M$.  Note that
		\begin{eqnarray*}
			\sup_{f\in \mathbb{B}} \left|\int_{X} p_D(f) d\mu+ H(q_D(f)) \right|&\leq& \sup_{f\in \mathbb{B}} \left|\int_{X} p_D(f) d\mu\right|+ \sup_{f\in \mathbb{B}}\left|H(q_D(f))\right|\\
			&\leq&	\sup_{f\in p_D(\mathbb{B})} \left|\int_{X} f d\mu\right|+ \sup_{f\in q_D(\mathbb{F})}\left|H(f)\right|\\
			&=& \omega_{p_D(\mathbb{B})}(\mu)+\rho_{q_D(\mathbb{B})}(H).     
		\end{eqnarray*}
		Hence, $\Psi$ is continuous. Now, let $\mathbb{B}$ and $\mathbb{F}$ be any bounded and finite subsets of $C(X)_{fin}^D$ and $M$, respectively. Then $\mathbb{B}\cup\mathbb{F}$ is bounded in $(C(X), \tau_{\mathcal{B}})$ and 
		$$\sup_{f\in \mathbb{B}}\left|\int_X fd\mu\right|+\sup_{f\in \mathbb{F}}|H(f)|\leq \sup_{f\in \mathbb{B}\cup \mathbb{F}}\left|\int_X p_D(f)d\mu+H(q_D(f))\right|.$$
		Therefore, $\Psi^{-1}$ is continuous. Hence, $\Psi$ is an isomorphism.
	\end{proof}
	
	Note that for every relatively compact set $B\subseteq X$, $\rho_B$ forms a seminorm on $C(X)$. Consequently, $M_B=\{0\}$. Therefore, for $\mathcal{B}\subseteq \mathcal{K}$, $\sigma$ is induced by $\{\omega_{\mathbb{B}}: \mathbb{B} \text{ is bounded in } (C(X), \tau_{\mathcal{B}})\}$. Thus, the following theorem follows from Theorem \ref{tauucb on the dual of C(X)}.  
	
	\begin{theorem} For every $\mathcal{B}\subseteq \mathcal{K}$, $(C(X), \tau_{\mathcal{B}})^*$ is isomorphic to $(\mathscr{M}_\mathcal{B}(X), \sigma)$. \end{theorem}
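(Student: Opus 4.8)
The plan is to obtain this as a direct specialization of Theorem \ref{tauucb on the dual of C(X)}. The first step is to record the structural simplification that occurs when $\mathcal{B}\subseteq\mathcal{K}$: every $B\in\mathcal{B}$ is relatively compact, so $\rho_B(f)=\sup_{x\in B}|f(x)|<\infty$ for all $f\in C(X)$, whence $\rho_B$ is an ordinary (finite-valued) seminorm and $C(X)_{fin}^B=C(X)$ for every $B\in\mathcal{B}$. In particular $(C(X),\tau_{\mathcal{B}})$ is a genuine locally convex space; consequently $Z_{fin}=\bigcap_{B\in\mathcal{B}}C(X)_{fin}^B=C(X)$ is open, so $(C(X),\tau_{\mathcal{B}})$ is a fundamental elcs and the hypothesis of Theorem \ref{tauucb on the dual of C(X)} is satisfied. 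This also matches the observation made immediately before the statement that $M_B=\{0\}$ for relatively compact $B$.

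Next I would note that any $D\in\mathcal{B}$ may serve as the distinguished set furnished by Theorem \ref{tauucb on the dual of C(X)}, since trivially $C(X)_{fin}^D=C(X)=C(X)_{fin}^B$ for all $B\in\mathcal{B}$. In the decomposition $C(X)=C(X)_{fin}^D\oplus M$ appearing in that theorem we therefore have $M=\{0\}$, so $p_D=\mathrm{id}_{C(X)}$, $q_D=0$, and $M^*=\{0\}$ (with only the trivial weak$^*$ topology). Applying Theorem \ref{tauucb on the dual of C(X)} then gives that $\bigl((C(X),\tau_{\mathcal{B}})^*,\tau_{ucb}\bigr)$ is isomorphic to $(\mathscr{M}_\mathcal{B}(X),\sigma)\times(\{0\},\tau_{w^*})$, which is just $(\mathscr{M}_\mathcal{B}(X),\sigma)$; concretely the isomorphism is $\mu\mapsto H_\mu$, $H_\mu(f)=\int_X f\,d\mu$. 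Here $\sigma$ is the topology on $\mathscr{M}_\mathcal{B}(X)$ generated by the seminorms $\omega_{\mathbb{B}}$ with $\mathbb{B}$ ranging over the bounded subsets of $C(X)_{fin}^D=C(X)$, i.e.\ over all bounded subsets of $(C(X),\tau_{\mathcal{B}})$, exactly as stated just before the theorem. Finally, since $(C(X),\tau_{\mathcal{B}})$ is a locally convex space, $\tau_{ucb}$ coincides with its strong topology, so the above is precisely the desired conclusion.

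I do not expect a real obstacle here: the theorem is a clean corollary. The only point that needs (immediate) verification is that the hypotheses of Theorem \ref{tauucb on the dual of C(X)} genuinely hold for $\mathcal{B}\subseteq\mathcal{K}$ and that the complement $M$ in its decomposition degenerates to $\{0\}$; both follow at once from the fact that each $\rho_B$ is finite-valued. If one wanted to avoid invoking Theorem \ref{tauucb on the dual of C(X)} directly, the same argument can be run from scratch: Theorem \ref{decomposition of continuous linear functionals} (with $M_B=\{0\}$) gives the underlying bijection $(C(X),\tau_{\mathcal{B}})^*\leftrightarrow\mathscr{M}_\mathcal{B}(X)$, and bicontinuity with respect to $\tau_{ucb}$ and $\sigma$ follows from the estimate $\sup_{f\in\mathbb{B}}|\int_X f\,d\mu|=\omega_{\mathbb{B}}(\mu)$ holding for every bounded $\mathbb{B}\subseteq C(X)$, which identifies the two generating families of seminorms.
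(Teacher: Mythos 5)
Your proposal is correct and follows the same route as the paper: the paper likewise observes that for relatively compact $B$ the extended seminorm $\rho_B$ is finite-valued, so $M_B=\{0\}$ and $\sigma$ is generated by $\{\omega_{\mathbb{B}}:\mathbb{B}\text{ bounded in }(C(X),\tau_{\mathcal{B}})\}$, and then deduces the result as a direct specialization of Theorem \ref{tauucb on the dual of C(X)}. Your additional verifications (that the fundamental-elcs hypothesis holds and that the complement $M$ degenerates) are exactly the points the paper leaves implicit, so the argument is complete.
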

	
	\begin{remark}\normalfont If we take $X\in \mathcal{B}$ and $X$ is compact, then $\sigma$ and $\tau_{\mathcal{B}}$ will be induced by the total variation norm $\|\cdot\|$ and the supremum norm $\|\cdot\|_\infty$, respectively. 
	\end{remark}

	\begin{remark} Observe that for every $\mathcal{K} \subsetneq \mathcal{B}$, the operator norm $\|\cdot\|_{op}$ no longer defines a norm on $(C(X), \tau_{\mathcal{B}})^*$. Therefore, the topology $\sigma$ may serve as a suitable candidate for studying the functional-analytic properties of the function space $(C(X), \tau_{\mathcal{B}})$. 	\end{remark}

	\bibliographystyle{plain}
	\bibliography{reference_file}	
\end{document}